\newtheorem{theorem}{\indent \sc Theorem}
\newtheorem{lemma}{\indent \sc Lemma}
\newtheorem{cor}{\indent \sc Corollary}
\theoremstyle{remark}
\newtheorem{myremark}{\indent \sc Remark}
\newcommand{\R}{\mathbf R}
\newcommand{\N}{\mathbf N}
\newcommand{\E}{{\sf E}}
\newcommand{\Prob}{{\sf P}}
\renewcommand{\le}{\leqslant}
\renewcommand{\ge}{\geqslant}
\newcommand{\la}{\lambda}
\title{On the accuracy of the approximation of the\\
complex exponent by the first terms of\\
its Taylor expansion with applications\thanks{Research supported by
the Russian Foundation for Basic Research (projects 11-01-00515a,
11-07-00112a, 11-01-12026-ofi-m) and by the grant of the President
of Russia (MK--2256.2012.1).}}
\author{Irina Shevtsova\thanks{Faculty of Computational Mathematics and
Cybernetics, Lomonosov Moscow State University, Leninskie Gory,
GSP-1, Moscow, 119991, Russia; Institute for Informatics Problems of
the Russian Academy of Sciences; e-mail: ishevtsova@cs.msu.su}}
\date{}
\begin{document}

\maketitle

\begin{abstract}

%
%

A new bound for the remainder term in the Taylor expansion of the
complex exponent $e^{ix}$, $x\in\R$, is proved yielding precise
moment-type estimates of the accuracy of the approximation of the
characteristic function (the Fourier--Stieltjes transform) of a
probability distribution by the first terms of its Taylor expansion.
Namely, for an arbitrary random variable $X$ with the characteristic
function $f(t)=\E e^{itX}$, $t\in\R$, and $\E X=0$, $\E X^2=1$,
$\E|X|^3=b\geqslant1$, the symbol $\E$ standing for the mathematical
expectation, the precise bounds
$$
|\E X^3| \leqslant c(b)\E|X|^3,
$$
\begin{eqnarray*}
\left|f(t)-1+t^2/2\right|&\leqslant&
\inf_{\lambda\geqslant0}\left(\lambda|\E
X^3|+q_3(\lambda)\E|X|^3\right) {|t|^3}/{6} \leqslant b\gamma_3(b)
{|t|^3}/{6},
\\
\left|f'(t)+t\right|&\leqslant&
\inf_{\lambda\geqslant0}\left(\lambda|\E
X^3|+q_2(\lambda)\E|X|^3\right) {t^2}/{2} \leqslant
b\gamma_2(b){t^2}/{2},
\\
\left|f''(t)+1\right|&\leqslant&
\inf_{\lambda\geqslant0}\left(\lambda|\E
X^3|+q_1(\lambda)\E|X|^3\right)|t| \leqslant b\gamma_1(b)|t|
\end{eqnarray*}
are proved for all $t\in\R$ and $b\geqslant1$, where the function
$\displaystyle c(b)=\sqrt{0.5\sqrt{1+8b^{-2}}+0.5 -2b^{-2}}$
increases strictly monotonically varying within the limits
$0=c(1)\leqslant c(b)<\lim\limits_{b\to\infty}c(b)=1$,
$$
q_n(\lambda)= \sup_{x>0}\frac{n!}{x^n}\bigg|
e^{ix}-\sum_{k=0}^{n-1}\frac{(ix)^k}{k!}
-\lambda\frac{(ix)^n}{n!}\bigg|,\quad \gamma_n(b)=
\inf_{\lambda>0}(\lambda c(b)+q_n(\lambda)).
$$
Moreover, the functions $\gamma_n(b)$ increase strictly
monotonically varying within the limits
$\gamma_n(1)\leqslant\gamma_n(b)<\lim\limits_{b\to\infty}\gamma_n(b)=1$,
$n=1,2,3,$ with $\gamma_3(1)<0.5950,$ $\gamma_2(1)=2/\pi<0.6367,$
$\gamma_1(1)<0.7247$.
\end{abstract}

\smallskip

{\bf Key words and phrases:} probability transformation, zero bias
transformation, shape bias transformation, characteristic function,
$L_1$-metric, moment inequality, McLaurin series, Taylor series

\smallskip

{\bf AMS 2010 Mathematics Subject Classification:} 60E10, 60E15,
26A06, 26A09, 41A10, 41A80 (primary), 41A44, 42A38, 49K35
(secondary)

%
%
%
%
%
%
%
%

\smallskip

\section{Introduction and motivation}

As is well known, the remainder term
$$
r_n(x) = e^{ix}-\sum_{k=0}^{n-1}\frac{(ix)^k}{k!},\quad x\in\R,\
n\in\N,
$$
in the Taylor expansion of the complex exponent satisfies the
precise inequality
\begin{equation}\label{ComplexExpTaylorApproxFromManuals}
|r_n(x)| \le{|x|^n}/{n!}, \quad x\in\R,\ n\in\N,
\end{equation}
with equality attained as $x\to0$, i. e. the factor $1/n!$ on the
r.-h. side of~\eqref{ComplexExpTaylorApproxFromManuals} cannot be
made less. Nevertheless, this does not mean that
inequality~\eqref{ComplexExpTaylorApproxFromManuals} is
unimprovable. Indeed, in 1991 H.\,Prawitz~\cite{Prawitz1991}
suggested to rearrange a part of the remainder (however, always a
smaller part) to the main term and proved that:
\begin{equation}\label{ComplexExpTaylorApproxPrawitz}
\bigg|r_n(x) -\frac{n}{2(n+1)}\cdot\frac{(ix)^n}{n!}\bigg|
\le\frac{n+2}{2(n+1)}\cdot\frac{|x|^n}{n!}, \quad x\in\R,\ n\in\N,
\end{equation}
with equality still attained as $x\to0$,
whence~\eqref{ComplexExpTaylorApproxFromManuals} immediately
follows.

The advantage of bound~\eqref{ComplexExpTaylorApproxPrawitz} as
compared with~\eqref{ComplexExpTaylorApproxFromManuals} becomes
especially noticeable, if $x$ is an integration variable. For
example, in probability theory $x$ may stand for the product $tX$ of
an arbitrary random variable (r.v.) $X$ defined on some probability
space $(\Omega,\mathcal A,{\sf P})$ and an argument $t\in\R$ of its
characteristic function (ch.f.)
$$
f(t)=\E e^{itX}=\int_{-\infty}^{+\infty} e^{itx}\, dF(x),\quad
F(x)={\sf P}(X<x),\ x\in\R,
$$
which is the Fourier--Stieltjes transform of the function of bounded
variation $F(x)$ (the distribution function of the r.v. $X$).
Namely, suppose that for some $n\in\N$\\[-2mm]
$$
\E|X|^n\equiv \int_{-\infty}^{+\infty}|x|^n dF(x)<\infty
$$
and denote
$$
\alpha_k=\E X^k,\quad \beta_k=\E|X|^k,\quad k=1,2,\ldots,n,
$$
$$
R_n(t)=\E\,r_n(tX)= \int_{-\infty}^{\infty}r_n(tx)\,dF(x)= f(t)
-\sum_{k=0}^{n-1}\frac{\alpha_k(it)^k}{k!}.
$$
Then, by virtue of the Jensen inequality, $|\alpha_k|\le\beta_k$,
$k=1,\ldots,n$. Moreover, $\alpha_n$ may vanish for odd $n$, for
example, for any symmetric distribution (i.e., if the r.v.'s $X$ and
$(-X)$ have identical distributions), whereas $\beta_n$ may be
infinitely large.

As it follows from~\eqref{ComplexExpTaylorApproxFromManuals},
\begin{equation}\label{ChFTaylorExpApproxFromManuals}
|R_n(t)|\leqslant \frac{\beta_n|t|^n}{n!},\quad t\in\R,
\end{equation}
with equality attained at any degenerate distribution as $t\to0$,
i.e. the factor $1/n!$ on the r.-h. side
of~\eqref{ChFTaylorExpApproxFromManuals} cannot be made less.
However, by use of inequality~\eqref{ComplexExpTaylorApproxPrawitz},
Prawitz managed to replace the absolute moment $\beta_n$ by the
linear combination of $|\alpha_n|$ and $\beta_n$ with coefficients
still summing up to one:
\begin{equation}\label{ChFTaylorExpApproxPrawitz}
|R_n(t)|\leqslant \frac{n|\alpha_n| +(n+2)\beta_n}{2(n+1)}\cdot
\frac{|t|^n}{n!},\quad t\in\R,
\end{equation}
whence~\eqref{ChFTaylorExpApproxFromManuals} immediately follows by
virtue of Jensen's inequality. Prawitz also paid a special attention
to the case $n=3$, which is very important in the problem of
estimation of the accuracy of the normal approximation to normalized
sums of independent random variables with finite third moments, and
in the same paper~\cite{Prawitz1991} noted that the coefficient
$$
\frac{n+2}{2(n+1)!}=\frac3{48}=0.1041\ldots
$$
at $\beta_3$ on the r.-h. side of~\eqref{ChFTaylorExpApproxPrawitz}
cannot be less than
$$
\varkappa_3\equiv\sup_{x>0}(\cos x-1+x^2/2)/x^3=0.0991\ldots\ .
$$

Inequality~\eqref{ComplexExpTaylorApproxPrawitz} stipulates natural
questions: if a larger part of the remainder is rearranged to the
main term, will the factor $(n+2)/(2(n+1)!)$ on the r.-h. side
of~\eqref{ComplexExpTaylorApproxPrawitz} become less or not? If yes,
then what is its least possible value and will the sum of the
coefficients at the corresponding main term and remainder still be
equal to one or will it increase? To answer these questions, we
propose to consider the functions
$$
q_n(\lambda)= \sup_{x>0}\frac{n!}{x^n}\bigg|
e^{ix}-\sum_{k=0}^{n-1}\frac{(ix)^k}{k!}
-\lambda\frac{(ix)^n}{n!}\bigg|,\quad \la\ge0,\ n=1,2,\ldots\
$$
(although here the supremum over $x>0$ can be replaced by the
supremum over all $x\in\R$, $x\neq0$, we will use a less cumbersome
variant), which guarantee the validity of the inequality
$$
\bigg|e^{ix}-\sum_{k=0}^{n-1}\frac{(ix)^k}{k!}
-\la\cdot\frac{(ix)^n}{n!}\bigg| \le q_n(\la)\cdot\frac{|x|^n}{n!},
\quad x\in\R,\ n\in\N,\ \la\ge0.
$$
Eliminating the real or the imaginary part in the definition of
$q_n(\lambda)$ we observe that
\begin{multline}\label{ComplexExpTaylorMyApprox_qLowBound}
\inf_{\la\ge0}q_n(\lambda)\ge
\\
\ge \left\{
 \begin{array}{l}
\displaystyle \sup_{x>0}\frac{n!}{x^n}\bigg|
\Re\bigg(e^{ix}-\sum_{k=0}^{n-1}\frac{(ix)^k}{k!}\bigg) \bigg|
=\sup_{x>0}\frac{n!}{x^n}\bigg|\cos x-
\sum_{k=0}^{(n-1)/2}\frac{(-1)^kx^{2k}}{(2k)!}\bigg|,\ \ n\hbox{ is
odd},
\\[2mm]
\displaystyle \sup_{x>0}\frac{n!}{x^n}\bigg|
\Im\bigg(e^{ix}-\sum_{k=0}^{n-1}\frac{(ix)^k}{k!}\bigg) \bigg|
=\sup_{x>0}\frac{n!}{x^n}\bigg|\sin x-
\sum_{k=1}^{n/2}\frac{(-1)^{k-1}x^{2k-1}}{(2k-1)!}\bigg|,\ \ n\hbox{
is even}.
  \end{array}
\right.
\end{multline}
Replacing the supremum $\sup_{x>0}$ in the definition of
$q_n(\lambda)$ by the limit $\lim_{x\to0+}$ we also notice that
$$
q_n(\lambda)\ge\lim_{x\to0}\frac{n!}{x^n}\bigg|
e^{ix}-\sum_{k=0}^{n-1}\frac{(ix)^k}{k!}
-\lambda\frac{(ix)^n}{n!}\bigg|\vee
\lim_{x\to\infty}\frac{n!}{x^n}\bigg|
e^{ix}-\sum_{k=0}^{n-1}\frac{(ix)^k}{k!}
-\lambda\frac{(ix)^n}{n!}\bigg|=|1-\la|\vee\la,
$$
for all $\la\ge0$, hence we will consider only the interval
$0\le\la<1/2$. Inequality~\eqref{ComplexExpTaylorApproxPrawitz}
implies that
$$
q_n(\la)=1-\la,\quad 0\le\la\le \frac{n}{2(n+1)}.
$$
Define
$$
\la_*=\la_*(n)=\sup\{\la\ge0\colon \sup_{0\le
s\le\la}(s+q_n(s))=1\},
$$
$$
\la^*=\la^*(n)=\inf\{\la\ge0\colon
q_n(\la)=\inf_{s\ge0}q_n(s)\},\quad n\in\N,
$$
i.\,e. $\la_*$ is the greatest value of $\la$ that minimizes the sum
$\la+q_n(\la)$, and $\la^*\ge\la_*$ is the least value of $\la$ that
minimizes $q_n(\la)$. Then, actually, only $\la\in[\la_*,\la^*]$ are
of interest. Inequality~\eqref{ComplexExpTaylorApproxPrawitz} also
implies that
$$
\la_*\ge\frac{n}{2(n+1)},
$$
and the posed questions can be re-formulated as follows:
$$
\la_*(n)\stackrel{?}{=} \frac{n}{2(n+1)},\quad
\la^*(n)\stackrel{?}{>}\la_*(n),\quad q_n(\la^*(n))=\,?
$$
Moreover, using the introduced functions it is easy to obtain the
following estimates for $R_n(t)$ and its derivatives, which
improve~\eqref{ChFTaylorExpApproxPrawitz}.

\begin{theorem}\label{ThChFTaylorExpApprox}
For any r.v. $X$ with the characteristic function $f(t)$ and
$\E|X|^n<\infty$ for some $n\in\N$, for all $t\in\R$ and
$\lambda\ge0$ the following estimates hold:
\begin{equation}\label{ChFTaylorExpApprox}
\bigg|f(t) -\sum_{k=0}^{n-1}\frac{\alpha_k(it)^k}{k!}
-\lambda\frac{\alpha_n(it)^n}{n!}\bigg|\le
q_n(\lambda)\frac{\beta_n|t|^n}{n!},
\end{equation}
\begin{equation}\label{ChFDerTaylorExpApprox}
\bigg|\frac{d^\ell f(t)}{dt^\ell}-\sum_{k=0}^{n-\ell-1}
i^{\ell+k}\alpha_{\ell+k}\frac{t^k}{k!} -\lambda
\frac{i^n\alpha_nt^{n-\ell}}{(n-\ell)!} \bigg|\leqslant
q_{n-\ell}(\la)\frac{\beta_n|t|^{n-\ell}}{(n-\ell)!},\quad
\ell=\overline{1,(n-1)}.
\end{equation}
\end{theorem}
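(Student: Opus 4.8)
The plan is to deduce both estimates directly from the pointwise bound on the complex exponent established just above,
$$
\left|e^{ix}-\sum_{k=0}^{n-1}\frac{(ix)^k}{k!}-\la\frac{(ix)^n}{n!}\right|\le q_n(\la)\frac{|x|^n}{n!},\qquad x\in\R,
$$
by integrating it against $dF$ and invoking Jensen's inequality.

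First I would prove~\eqref{ChFTaylorExpApprox}. Since $\alpha_k=\E X^k$ for $k\le n$, the expression under the modulus on the left-hand side equals exactly $\E\bigl(e^{itX}-\sum_{k=0}^{n-1}(itX)^k/k!-\la(itX)^n/n!\bigr)$. Moving the modulus inside the expectation (Jensen's inequality, equivalently the triangle inequality for the Stieltjes integral), then applying the displayed pointwise bound with $x$ replaced by $tX$, and finally factoring out $|t|^n/n!$ and using $\E|X|^n=\beta_n$, yields $q_n(\la)\beta_n|t|^n/n!$ in a couple of lines.

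For~\eqref{ChFDerTaylorExpApprox} one first recalls that $\E|X|^n<\infty$ makes $f$ $n$ times continuously differentiable with $f^{(\ell)}(t)=\E\bigl((iX)^\ell e^{itX}\bigr)$ for $0\le\ell\le n$ --- the standard dominated-convergence (differentiation under the expectation) argument, legitimate since $\bigl|(iX)^\ell e^{itX}\bigr|=|X|^\ell\le 1+|X|^n$ is integrable. Next I would verify the algebraic identity
$$
\frac{d^\ell f(t)}{dt^\ell}-\sum_{k=0}^{n-\ell-1}i^{\ell+k}\alpha_{\ell+k}\frac{t^k}{k!}-\la\frac{i^n\alpha_n t^{n-\ell}}{(n-\ell)!}=\E\left[(iX)^\ell\left(e^{itX}-\sum_{k=0}^{n-\ell-1}\frac{(itX)^k}{k!}-\la\frac{(itX)^{n-\ell}}{(n-\ell)!}\right)\right],
$$
which follows term by term by distributing $(iX)^\ell$ over the bracket and using $\alpha_m=\E X^m$. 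Then the scheme of the previous paragraph applies verbatim: move the modulus inside the expectation, bound $\bigl|(iX)^\ell\bigr|=|X|^\ell$, invoke the pointwise inequality with $n$ replaced by $n-\ell$ (valid since $\ell\le n-1$, so $n-\ell\ge1$) and $x=tX$, and combine $\E\bigl(|X|^\ell\,|tX|^{n-\ell}\bigr)=\beta_n|t|^{n-\ell}$.

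The argument is essentially mechanical. The only points that need a word are that $q_n(\la)<\infty$ for every $\la\ge0$ --- the function $x\mapsto \frac{n!}{x^n}\bigl|e^{ix}-\sum_{k=0}^{n-1}(ix)^k/k!-\la(ix)^n/n!\bigr|$ is continuous on $(0,\infty)$ with finite limits (namely $|1-\la|$ and $\la$) as $x\to0+$ and $x\to\infty$, hence bounded --- so that the right-hand sides are meaningful, and the justification of differentiation under the expectation sign, which is entirely standard. I do not anticipate a genuine obstacle.
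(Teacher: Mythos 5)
Your proposal is correct, and it is exactly the argument the paper intends: the theorem is stated as an immediate consequence of the defining inequality for $q_n(\lambda)$ (the paper gives no separate proof), obtained by taking $x=tX$, integrating against $dF$, and, for the derivatives, differentiating $\ell$ times under the expectation and using $\E\bigl(|X|^\ell|tX|^{n-\ell}\bigr)=\beta_n|t|^{n-\ell}$. Your added remarks on the finiteness of $q_n(\lambda)$ and the justification of differentiation under the expectation sign are sound and complete the routine details.
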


\begin{myremark}
If $\la_*$, $\la^*$ are known, then, actually,
in~\eqref{ChFTaylorExpApprox}, \eqref{ChFDerTaylorExpApprox} the
greatest lower bounds over $\la\ge0$ can be replaced by those over
smaller sets $\la_*\le\la\le\la^*$, and it suffices to study the
properties of the functions $q_n(\la)$ only within the intervals
$\la\in[\la_*,\la^*]$.
\end{myremark}

In lemmas~\ref{LemComplexExpTaylorApprox_n=3},
\ref{LemComplexExpTaylorApprox_n=2},
\ref{LemComplexExpTaylorApprox_n=1} below, it will be demonstrated
that for $n=1,2,3$
$$
\la_*(1)=\frac14=0.25,\ \la_*(2)=\frac13=0.3333\ldots,\
\la_*(3)=\frac38=0.375,
$$
$$
\lambda^*(1) = \frac{\sin\theta_1^*}{\theta_1^*}=0.3108\ldots, \quad
\la^*(2)=4\pi^{-2}=0.4052\ldots,\quad
\la^*(3)=6\,\frac{\theta_3^*-\sin\theta_3^*}{(\theta_3^*)^3}
=0.4466\ldots,
$$
where $\theta_1^*=2.3311\ldots,$ $\theta_3^*=3.9958\ldots$ are,
respectively, the unique roots of the equations
$$
\theta_1\sin\theta_1 +\cos\theta_1-1=0,\quad \theta_1\in(0,\pi),
$$
$$
\theta_3^2+2\theta_3\sin\theta_3 +6(\cos\theta_3-1)=0,\quad
\theta_3\in(0,2\pi),
$$
i.\,e. the functions $\la+q_n(\la)$ are constant (and equal to one)
within the intervals $0\le\la\le n/(2(n+1))=\la_*(n)$, increase
strictly monotonically for $\la_*(n)\le\la\le\la^*(n)$, and the
functions $q_n(\la)$ decrease strictly monotonically for
$0\le\la\le\la^*(n)$ and attain their minimum values at
$\la=\la^*(n)$, $n=1,2,3$. In addition, in
lemmas~\ref{LemComplexExpTaylorApprox_n=3},
\ref{LemComplexExpTaylorApprox_n=2},
\ref{LemComplexExpTaylorApprox_n=1} below it will be proved that
\begin{eqnarray*}
\inf_{\la\ge0}q_1(\la)&=&q_1(\la^*(1))= \sup_{x>0}\frac{1-\cos
x}{x}= \frac{1-\cos\theta_1^*}{\theta_1^*}=0.7246\ldots,
\\
\inf_{\la\ge0}q_2(\la)&=&q_2(4\pi^{-2})= 2\sup_{x>0}\frac{x-\sin
x}{x^2}=2\,\frac{x-\sin x}{x^2}\Big|_{x=\pi} =\frac2\pi
=0.6366\ldots,
\\
\inf_{\la\ge0}q_3(\la)&=&q_3(\la^*(3))= 6\sup_{x>0}\frac{\cos
x-1+x^2/2}{x^3}=
6\frac{\cos\theta_3^*-1+(\theta_3^*)^2/2}{(\theta_3^*)^3}=
0.5949\ldots (=6\varkappa_3),
\end{eqnarray*}
i.\,e., actually, for $n=1,2,3$
inequalities~\eqref{ComplexExpTaylorMyApprox_qLowBound} hold with
the equality sign. In other words, say, for $n=3$ one can eliminate
the imaginary part of $(e^{ix}-1-ix-(ix)^2/2-\lambda(ix)^3/6)$ when
searching the supremum in the definition of $q_3(\lambda)$ by
choosing a special value of $\lambda=\lambda^*(3)$:
$$
\inf_{\lambda\ge0}\sup_{x>0}\frac1{x^3}\sqrt{\Big(\cos
x-1+\frac{x^2}2\Big)^2 +\Big(\sin x -x+\frac{\la x^3}6\Big)^2}=
\sup_{x>0}\frac{\cos x-1+{x^2}/2}{x^3}.
$$

Note that the estimates for $R_n(t)$ and its derivatives
\begin{eqnarray}
\label{ChFTaylorExpApprox_n=3}
\left|f(t)-1-i\alpha_1t+\alpha_2t^2/2\right|&\le&
\min_{3/8\le\la\le\la^*(3)}(\la|\alpha_3|+q_3(\la)\beta_3)
{|t|^3}/{6},
\\
\label{ChFDer1TaylorExpApprox_n=3}
\left|f'(t)-i\alpha_1+\alpha_2t\right|&\le&
\min_{1/3\le\la\le4\pi^{-2}}(\la|\alpha_3|+q_2(\la)\beta_3)
{t^2}/{2},
\\
\label{ChFDer2TaylorExpApprox_n=3} \left|f''(t)+\alpha_2\right|&\le&
\min_{1/4\le\la\le\la^*(1)}(\la|\alpha_3|+q_1(\la)\beta_3)|t|,
\\
\label{ChFTaylorExpApprox_n=1} \left|f(t)-1\right|&\le&
\min_{1/4\le\la\le\la^*(1)}(\la|\alpha_1|+q_1(\la)\beta_1) |t|,
\end{eqnarray}
implied by theorem~\ref{ThChFTaylorExpApprox} for $n=1,2,3$ are
precise in the sense that equalities in
\eqref{ChFTaylorExpApprox_n=3}--\eqref{ChFTaylorExpApprox_n=1} are
attained for each $|t|\le\theta$ at the symmetric three-point
distributions of the form
$\Prob(|X|=\theta/|t|)=t^2/\theta^2=1-\Prob(X=0)$ (for which
$f(t)=1-t^2\theta^{-2}(1-\cos\theta)$, $\alpha_1=\alpha_3=0$,
$\alpha_2=1$, $\beta_1=|t|/\theta$, $\beta_3=\theta/|t|$) with
$\theta=\theta_3^*$ in~\eqref{ChFTaylorExpApprox_n=3}, $\theta=\pi$
in~\eqref{ChFDer1TaylorExpApprox_n=3}, and $\theta=\theta_1^*$
in~\eqref{ChFDer2TaylorExpApprox_n=3},~\eqref{ChFTaylorExpApprox_n=1}.

The following theorem allows to get rid of the third moment
$\alpha_3$
in~\eqref{ChFTaylorExpApprox_n=3}--\eqref{ChFDer2TaylorExpApprox_n=3}
with $\E X$, $\E(X-\E X)^2$, $\E|X-\E X|^3$ being fixed.

\begin{theorem}\label{Th_|alpha3|<=beta3}
For all $b\ge1$ and any r.v. $X$ with $\E X=0$, $\E X^2=1$,
$\E|X|^3=b$
$$
\left|\E X^3\right|\le A(b)\E|X|^3,
$$
where
$$
A(b)=\sqrt{\frac12\sqrt{1+8b^{-2}}+\frac12-2b^{-2}}<1,
$$
with equality attained for each $b\ge1$ at the two-point
distribution
$$
\Prob\bigg(X=\Big({\frac{1\mp u} {1\pm u}}\Big)^{1/2} \bigg)
=\frac{1\pm u}2, \quad u=\sqrt{b\sqrt{b^2+8}/2-b^2/2-1}.
$$
Moreover, the function $A(b)$ is concave and increases strictly
monotonically varying within the limits $0=A(1)\le
A(b)<\lim\limits_{b\to\infty}A(b)=1$, $b\ge1$. The function $bA(b)$,
$b\ge1$, is concave as well.
\end{theorem}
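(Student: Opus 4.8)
The plan is to bound $\E X^3$ by exhibiting, for each admissible $b$, an explicit function $g(x)=a_0+a_1x+a_2x^2+a_3|x|^3$ lying above $x^3$ on all of $\R$, so that $\E X^3\le\E g(X)$ depends only on the prescribed moments and turns out to equal $A(b)\E|X|^3$. First reduce to $\E X^3\ge0$ (replace $X$ by $-X$). The value $b=1$ is degenerate: equality $\E X^2=(\E|X|^3)^{2/3}=1$ in Lyapunov's inequality forces $|X|\equiv1$, hence $X$ is the symmetric two-point law on $\{-1,1\}$ and $\E X^3=0=A(1)\E|X|^3$; so assume $b>1$. Observe that $a\mapsto b=\frac{a^4+1}{a(a^2+1)}$ is a strictly increasing bijection of $(1,\infty)$ onto itself (its derivative is positive for $a>1$), and let $a=a(b)>1$ be the corresponding value, with $c:=1/a<1$.

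Define $g$ by the four tangency requirements that $g(x)-x^3$ have a double zero at $x=a$ on $[0,\infty)$, where $g(x)=a_0+a_1x+a_2x^2+a_3x^3$, and a double zero at $x=-c$ on $(-\infty,0]$, where $g(x)=a_0+a_1x+a_2x^2-a_3x^3$. These determine $a_0,a_1,a_2,a_3$ uniquely as rational functions of $a$; in particular $a_3=\frac{(a^2+1)^3}{(a^2-1)(a^4+4a^2+1)}>1$. On $[0,\infty)$ the difference $g(x)-x^3$ is then a cubic with positive leading coefficient $a_3-1$ and a double root at $a$, hence equals $(a_3-1)(x-a)^2(x-\rho)$; a short computation gives $\rho<0$, so it is $\ge0$ for $x\ge0$. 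On $(-\infty,0]$ it equals $-(a_3+1)(x+c)^2(x-\tau)$ with $\tau>0$, hence is $\ge0$ for $x\le0$. Thus $g(x)\ge x^3$ for all $x\in\R$, and consequently, using $\E X=0$ and $\E X^2=1$,
$$\E X^3\le\E g(X)=a_0+a_2+a_3b.$$
To identify this quantity, apply the same chain to the two-point law $X'$ with $\Prob(X'=a)=\frac{c}{a+c}$, $\Prob(X'=-c)=\frac{a}{a+c}$: one checks $\E X'=0$, $\E(X')^2=ac=1$, $\E|X'|^3=\frac{a^2+c^2}{a+c}=\frac{a^4+1}{a(a^2+1)}=b$, so $a_0+a_2+a_3b=\E g(X')$; but $g$ coincides with $x^3$ at $a$ and at $-c$, whence $\E g(X')=\E(X')^3=a-c$. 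Finally the identity $(a-1/a)^2=\frac12 b\sqrt{b^2+8}+\frac12 b^2-2=A(b)^2b^2$ (equivalent to $b=\frac{a^4+1}{a(a^2+1)}$) gives $a_0+a_2+a_3b=a-c=A(b)b=A(b)\E|X|^3$, which is the asserted bound; equality holds throughout for $X=X'$, which under $u=\frac{a^2-1}{a^2+1}=\sqrt{b\sqrt{b^2+8}/2-b^2/2-1}$ is exactly the stated two-point distribution (and $\pm X'$ covers either sign of $\E X^3$). The bound $A(b)<1$ follows at once from $\sqrt{1+8b^{-2}}<1+4b^{-2}$.

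It remains to prove the qualitative statements. With $f(b):=A(b)^2=\frac12\sqrt{1+8b^{-2}}+\frac12-2b^{-2}$ one has $f'(b)=4b^{-3}(1-(1+8b^{-2})^{-1/2})>0$ and $f''(b)=-12b^{-4}(1-(1+8b^{-2})^{-1/2})-32b^{-6}(1+8b^{-2})^{-3/2}<0$, so $f$ is strictly increasing and concave with $f(1)=0$ and $f(b)\uparrow1$; since $A=\sqrt f$ and $(\sqrt f)''=(2ff''-(f')^2)/(4f^{3/2})<0$, the function $A$ is strictly increasing and concave with $A(1)=0$, $A(b)\uparrow1$, $A(b)<1$. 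For $bA(b)$, put $h(b):=(bA(b))^2=b^2f(b)=\frac12 b\sqrt{b^2+8}+\frac12 b^2-2>0$ on $(1,\infty)$; writing $w:=\sqrt{b^2+8}$ one finds $2h=bw+w^2-12$, $h'=w+b-4/w$, $h''=1+b(w^2+4)/w^3$, and the concavity criterion $2hh''\le(h')^2$ reduces, after multiplying by $w^3$ and using $w^2=b^2+8$, to $48(w+b)\ge0$; hence $(\sqrt h)''\le0$ and $bA(b)$ is concave. The main obstacle is the middle block — pinning down $a_0,\dots,a_3$ and simultaneously checking $a_3>1$ together with the signs of the remaining roots $\rho,\tau$, which is precisely what makes $g\ge x^3$ hold globally; this is the only genuinely computational step, and the device of evaluating $a_0+a_2+a_3b$ through the extremal two-point law $X'$ rather than by substituting the explicit coefficients is what keeps it short.
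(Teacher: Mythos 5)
Your proof is correct, but it takes a genuinely different route from the paper's. The paper invokes the Hoeffding and Mulholland--Rogers reduction of the extremal problem (a linear functional of the distribution under three linear moment constraints) to laws concentrated in at most four points, and then eliminates three- and four-point candidates by a Lagrange-multiplier analysis, leaving the unique two-point law; you instead exhibit a dual certificate, the majorant $g(x)=a_0+a_1x+a_2x^2+a_3|x|^3\ge x^3$ tangent to $x^3$ at the atoms $a$ and $-c=-1/a$ of the conjectured extremal law, so that $\E X^3\le\E g(X)=a_0+a_2+a_3b$, and you evaluate this constant on the extremal two-point law $X'$ itself (where $g$ coincides with $x^3$) to get $a_0+a_2+a_3b=a-c=bA(b)$ without ever writing out $a_0,a_1,a_2$. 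I verified the computational claims you left implicit: the tangency system is nonsingular and gives $a_3=(a+c)^3/\big((a-c)(a^2+4ac+c^2)\big)=(a^2+1)^3/\big((a^2-1)(a^4+4a^2+1)\big)>1$ (with $t=a^2$ the numerator exceeds the denominator by $6t+2$); the remaining root on $[0,\infty)$ is $\rho=-2ac/(3a+c)<0$, whence $\tau>0$ follows from $-(a_3-1)a^2\rho=(a_3+1)c^2\tau$; the map $a\mapsto(a^4+1)/(a(a^2+1))$ is indeed an increasing bijection of $(1,\infty)$ onto itself; $(a-1/a)^2=b^2A^2(b)$ and $u=(a^2-1)/(a^2+1)$ recover the stated extremal law (up to the sign of one atom, which in the theorem's printed formula is evidently a typo, since two positive atoms cannot have zero mean); and your reduction $(h')^2-2hh''=48(w+b)/w^3>0$ for the concavity of $bA(b)$ is exact. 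Your treatment of $b=1$ via the equality case of Lyapunov's inequality, and the monotonicity/concavity computations for $A(b)$, essentially coincide with the paper's. What your approach buys is a self-contained elementary argument that needs neither the reduction theorem nor the multi-case stationarity analysis; what the paper's approach buys is that one need not guess the extremal distribution in advance, since it emerges from the variational analysis.
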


Theorem~\ref{Th_|alpha3|<=beta3} improves Jensen's inequality, which
states that $|\alpha_3|/\beta_3\le1$: actually, this ratio is
strictly less than one for all distributions with zero mean and only
tends to one as the normalized third moment $\beta_3/\beta_2^{3/2}$
goes to infinity.

Theorems~\ref{ThChFTaylorExpApprox} and~\ref{Th_|alpha3|<=beta3}
imply

\begin{cor}\label{CorChFTaylorExpApproxNOalpha3}
For all $b\ge1$ and any r.v. $X$ with $\E X=0$, $\E X^2=1$,
$\E|X|^3=b$ the following inequalities hold for all $t\in\R$:
\begin{eqnarray*}
\left|f(t)-1+t^2/2\right|&\le& b\gamma_3(b) {|t|^3}/{6},
\\
\left|f'(t)+t\right|&\le& b\gamma_2(b){t^2}/{2},
\\
\left|f''(t)+1\right|&\le& b\gamma_1(b)|t|,
\\
\left|f(t)-1\right|&\le& (\theta_1^*)^{-1}(1-\cos\theta_1^*)\E|tX|
\le 0.7247\cdot|t|\E|X|,
\end{eqnarray*}
where
$$
\gamma_n(b)= \min_{\la_*(n)\le\la\le\la^*(n)}(\la
A(b)+q_n(\la)),\quad n=1,2,3,
$$
moreover, the functions $b\gamma_n(b)$, $\gamma_n(b)$, $n=1,2,3,$
are concave and increase strictly monotonically in $b\ge1$,
$\gamma_n(b)$ varying within the limits
$$
q_n(\la^*(n))=\gamma_n(1)\le
\gamma_n(b)<\lim\limits_{b\to\infty}\gamma_n(b)=1,\quad b\ge1,\quad
n=1,2,3.
$$
\end{cor}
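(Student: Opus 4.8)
The plan is to obtain the four displayed inequalities by combining Theorems~\ref{ThChFTaylorExpApprox} and~\ref{Th_|alpha3|<=beta3} (in the already recorded form \eqref{ChFTaylorExpApprox_n=3}--\eqref{ChFTaylorExpApprox_n=1}), and then to deduce the analytic behaviour of the functions $\gamma_n$ from the concavity and strict monotonicity of $A$ and from the elementary properties of $q_n$, $\la_*(n)$, $\la^*(n)$ established in Lemmas~\ref{LemComplexExpTaylorApprox_n=3}--\ref{LemComplexExpTaylorApprox_n=1} below.

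First, under the present hypotheses $\alpha_1=\E X=0$, $\alpha_2=\E X^2=1$, $\beta_3=\E|X|^3=b$, and Theorem~\ref{Th_|alpha3|<=beta3} yields $|\alpha_3|\le A(b)\,b$. Inserting these into \eqref{ChFTaylorExpApprox_n=3}, \eqref{ChFDer1TaylorExpApprox_n=3}, \eqref{ChFDer2TaylorExpApprox_n=3}, I would use that for every $\la\in[\la_*(n),\la^*(n)]$
$$
\la|\alpha_3|+q_n(\la)\beta_3\le\la A(b)\,b+q_n(\la)\,b=b\bigl(\la A(b)+q_n(\la)\bigr),
$$
so that taking the minimum over $\la\in[\la_*(n),\la^*(n)]$ replaces the right-hand sides by $b\gamma_n(b)$ multiplied by $|t|^3/6$, $t^2/2$, $|t|$ respectively; this gives the first three bounds. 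The fourth follows from \eqref{ChFTaylorExpApprox_n=1} with $\alpha_1=0$: the bound there reduces to $\min_\la q_1(\la)\beta_1|t|=q_1(\la^*(1))\E|X|\,|t|$ because $q_1$ is non-increasing on $[0,\la^*(1)]$, and by the lemmas below $q_1(\la^*(1))=(1-\cos\theta_1^*)/\theta_1^*=0.7246\ldots<0.7247$.

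For the properties of $\gamma_n$, set $h_b(\la)=\la A(b)+q_n(\la)$, so that $\gamma_n(b)=\min_{\la_*(n)\le\la\le\la^*(n)}h_b(\la)$ and $b\gamma_n(b)=\min_\la\bigl(\la\cdot bA(b)+q_n(\la)\cdot b\bigr)$. Since by Theorem~\ref{Th_|alpha3|<=beta3} both $A(b)$ and $bA(b)$ are concave in $b$, and since $\la\ge0$, $q_n(\la)\ge0$, each of the maps $b\mapsto h_b(\la)$ and $b\mapsto\la bA(b)+q_n(\la)b$ is concave, and a pointwise minimum of concave functions is concave; hence $\gamma_n(b)$ and $b\gamma_n(b)$ are concave. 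For strict monotonicity the key remark is that the minimisation runs over $\la\ge\la_*(n)=n/(2(n+1))\ge1/4>0$: for $1\le b_1<b_2$ and any admissible $\la$ one has $h_{b_2}(\la)=h_{b_1}(\la)+\la(A(b_2)-A(b_1))\ge\gamma_n(b_1)+\la_*(n)(A(b_2)-A(b_1))$, and minimising over $\la$ gives $\gamma_n(b_2)\ge\gamma_n(b_1)+\la_*(n)(A(b_2)-A(b_1))>\gamma_n(b_1)$ because $A$ is strictly increasing; likewise, using that $bA(b)$ is non-decreasing, $b_2\bigl(\la A(b_2)+q_n(\la)\bigr)\ge b_1\bigl(\la A(b_1)+q_n(\la)\bigr)+(b_2-b_1)q_n(\la)\ge b_1\gamma_n(b_1)+(b_2-b_1)q_n(\la^*(n))$, and $q_n(\la^*(n))>0$ yields strict monotonicity of $b\gamma_n(b)$.

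Finally, since $A(1)=0$ we get $\gamma_n(1)=\min_{\la_*(n)\le\la\le\la^*(n)}q_n(\la)=q_n(\la^*(n))$ (again because $q_n$ is non-increasing on $[0,\la^*(n)]$), with the stated numerical values taken from the lemmas below. For the upper estimate I would use that $q_n(\la)=1-\la$ on $[0,\la_*(n)]$ by \eqref{ComplexExpTaylorApproxPrawitz} and that $\la+q_n(\la)$ is non-decreasing on $[\la_*(n),\la^*(n)]$, whence $q_n(\la)\ge1-\la$ throughout $[\la_*(n),\la^*(n)]$: then $h_b(\la)\ge1-\la(1-A(b))\ge1-\la^*(n)(1-A(b))$ for all admissible $\la$, so $\gamma_n(b)\ge1-\la^*(n)(1-A(b))$, while the choice $\la=\la_*(n)$ gives $\gamma_n(b)\le\la_*(n)A(b)+1-\la_*(n)=1-\la_*(n)(1-A(b))<1$; since $A(b)\to1$ as $b\to\infty$ by Theorem~\ref{Th_|alpha3|<=beta3}, both bounds converge to $1$ and hence $\lim_{b\to\infty}\gamma_n(b)=1$. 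No step here is genuinely difficult; the two points that require care are keeping the minimising parameter bounded away from $0$ --- which is exactly why $\la_*(n)=n/(2(n+1))$ enters the picture --- and interchanging the limit $b\to\infty$ with the minimisation, which above is handled by the uniform two-sided squeeze $1-\la^*(n)(1-A(b))\le\gamma_n(b)\le1-\la_*(n)(1-A(b))$ rather than by any argument about where the minimiser sits.
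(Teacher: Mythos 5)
Your proposal is correct and follows essentially the route the paper intends: the paper states the corollary as an immediate consequence of Theorems~\ref{ThChFTaylorExpApprox} and~\ref{Th_|alpha3|<=beta3} together with Lemmas~\ref{LemComplexExpTaylorApprox_n=3}--\ref{LemComplexExpTaylorApprox_n=1}, and your substitution $|\alpha_3|\le A(b)b$, $\beta_3=b$ into \eqref{ChFTaylorExpApprox_n=3}--\eqref{ChFTaylorExpApprox_n=1}, plus deriving concavity of $\gamma_n(b)$ and $b\gamma_n(b)$ from the concavity of $A(b)$ and $bA(b)$ (which is exactly why the paper proves those properties) and the monotonicity/limit claims from $A$ strictly increasing to $1$ and $q_n(\la)\ge1-\la$, is precisely the intended argument, filled in with correct details.
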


The values of the functions $\gamma_n(b)$, $n=1,2,3,$ for some
$b\ge1$ are presented in columns 2, 5, 8 of
table~\ref{TabChFgamma_n(b)+lambda_n(b),n=123}. In columns 3, 6, 9,
the values of $\la_n=\la_n(b)$ are specified, that deliver minimum
in the definition of $\gamma_n(b)$, and in columns 4, 7, 10 the
values of $q_n=q_n(\la_n(b))$ are presented as well.

\begin{table}[h]
\centering\small
\begin{tabular}{||c||c|c|c||c|c|c||c|c|c||}
\hline $b=$&$\gamma_1(b)\le$&$\la_1\ge$&$q_1\le$
&$\gamma_2(b)\le$&$\la_2\ge$&$q_2\le$
&$\gamma_3(b)\le$&$\la_3\ge$&$q_3\le$\\
\hline
1 & 0.724612 & 0.3108 & 0.7247 & 0.636620 & 0.4052 & 0.6367 & 0.594972 & 0.4466 & 0.5950\\
1.0001 & 0.729674 & 0.3091 & 0.7247 & 0.643222 & 0.4033 & 0.6367 & 0.602250 & 0.4447 & 0.5950\\
1.001 & 0.740517 & 0.3057 & 0.7248 & 0.657374 & 0.3992 & 0.6368 & 0.617864 & 0.4407 & 0.5952\\
1.005 & 0.759711 & 0.2999 & 0.7253 & 0.682462 & 0.3924 & 0.6374 & 0.645582 & 0.4340 & 0.5957\\
1.01 & 0.773696 & 0.2960 & 0.7258 & 0.700771 & 0.3877 & 0.6380 & 0.665840 & 0.4293 & 0.5964\\
1.05 & 0.828077 & 0.2821 & 0.7293 & 0.772182 & 0.3714 & 0.6422 & 0.745088 & 0.4130 & 0.6005\\
1.10 & 0.863075 & 0.2743 & 0.7325 & 0.818315 & 0.3621 & 0.6460 & 0.796466 & 0.4038 & 0.6043\\
1.20 & 0.903490 & 0.2662 & 0.7370 & 0.871750 & 0.3526 & 0.6512 & 0.856138 & 0.3943 & 0.6095\\
1.30 & 0.927590 & 0.2618 & 0.7399 & 0.903693 & 0.3473 & 0.6547 & 0.891887 & 0.3890 & 0.6130\\
1.40 & 0.943762 & 0.2590 & 0.7421 & 0.925160 & 0.3440 & 0.6573 & 0.915944 & 0.3857 & 0.6156\\
1.50 & 0.955288 & 0.2570 & 0.7436 & 0.940474 & 0.3417 & 0.6591 & 0.933121 & 0.3834 & 0.6174\\
1.60 & 0.963824 & 0.2556 & 0.7448 & 0.951825 & 0.3400 & 0.6605 & 0.945860 & 0.3817 & 0.6188\\
1.70 & 0.970322 & 0.2546 & 0.7457 & 0.960468 & 0.3387 & 0.6616 & 0.955565 & 0.3804 & 0.6199\\
1.79 & 0.975371 & 0.2538 & 0.7464 & 0.967189 & 0.3378 & 0.6624 & 0.963114 & 0.3795 & 0.6208\\
1.90 & 0.979362 & 0.2531 & 0.7470 & 0.972502 & 0.3370 & 0.6631 & 0.969083 & 0.3787 & 0.6214\\
2.00 & 0.982560 & 0.2526 & 0.7475 & 0.976760 & 0.3364 & 0.6637 & 0.973868 & 0.3781 & 0.6220\\
3.00 & 0.995576 & 0.2506 & 0.7494 & 0.994102 & 0.3341 & 0.6659 & 0.993365 & 0.3757 & 0.6243\\
4.00 & 0.998416 & 0.2502 & 0.7498 & 0.997888 & 0.3336 & 0.6664 & 0.997624 & 0.3752 & 0.6248\\
5.00 & 0.999306 & 0.2501 & 0.7499 & 0.999075 & 0.3334 & 0.6666 & 0.998959 & 0.3751 & 0.6249\\
$\infty$ & $1$ & $1/4$ & $3/4$ & $1$ & $1/3$ & $2/3$ & $1$ & $3/8$ & $5/8$\\
\hline
\end{tabular}
\caption{The values of the functions $\gamma_n(b)$ rounded up for
some $b\ge1$ (columns 2, 5, 8), the corresponding values of
$\la_n=\la_n(b)$ rounded down, that deliver minimum in the
definition of $\gamma_n(b)$ (columns 3, 6, 9), and the values of
$q_n=q_n(\la_n(b))$ rounded up (columns 4, 7, 10) for $n=1,2,3$.}
\label{TabChFgamma_n(b)+lambda_n(b),n=123}
\end{table}

The problem of estimation of the accuracy of the approximation of
characteristic functions by polynomials was also considered
in~\cite{Rozovsky1985}.

Note that the estimates given in
corollary~\ref{CorChFTaylorExpApproxNOalpha3} are rather rough
either for large~$t$, or for large~$b$. However, this defect can be
corrected if the characteristic function $f(t)$ is approximated by
its derivatives (and the derivatives~--- by the characteristic
function). Namely, the following estimates can be derived from
corollary~\ref{CorChFTaylorExpApproxNOalpha3} and the results
of~\cite{KorolevShevtsova2010SAJ, Shevtsova2012ShapeBias} which are
obtained with the application of the zero biased and shape biased
transformations:
\begin{eqnarray*}
\Big|f(t)+\frac{f'(t)}t\Big|&\le&
2\sin\Big(\frac{bt}4\wedge\frac\pi2\Big)\wedge
\Big(\gamma_2(b)\cdot\frac{b|t|}{2}+\frac{t^2}2 \Big),
\\
\left|f(t)+f''(t)\right|&\le&
2\sin\Big(\frac{b|t|}2\wedge\frac\pi2\Big) \wedge\Big(
\gamma_1(b)\cdot b|t|+\frac{t^2}2 \Big)
\end{eqnarray*}
for all $t\in\R$ and any r.v. $X$ with $\E X=0$, $\E X^2=1$,
$\E|X|^3=b\ge1$. Note that the r.h.-sides of the last inequalities
remain bounded for large $t$ as well as for large $b$.

The presented estimates for characteristic functions allow to
sharpen substantially the Berry--Esseen inequality and its
structural improvements (see, e.g., the recent works
\cite{Shevtsova2006,Tyurin2009DAN,Tyurin2010TVP,
KorolevShevtsova2010DAN1,KorolevShevtsova2010DAN2,KorolevShevtsova2009,
Tyurin2009arxiv,Tyurin2010UMN,KorolevShevtsova2010SAJ,
Shevtsova2010DAN_0.56,KorolevShevtsova2010TVP,
Tyurin2011,Shevtsova2011arxiv} and references
in~\cite{KorolevShevtsova2009,KorolevShevtsova2010SAJ}), non-uniform
estimates of the accuracy of the normal approximation to
distributions of sums of independent r.v.'s
(see~\cite{Paditz1977,Michel1981,Tysiak1983,Paditz1986,
NefedovaShevtsova2012,NefedovaShevtsova2011DAN} and references
therein), as well as uniform and non-uniform moment-type estimates
of the rate of convergence in limit theorems for compound and mixed
compound Poisson distributions (see
\cite{Michel1993,KorolevShorgin1997,KorolevShevtsova2010DAN2,
KorolevShevtsova2010SAJ,NefedovaShevtsova2011DAN}).

\section{Proofs}

The following lemmas establish the properties of the functions
$q_n(\la)$ and give the exact values of the quantities $\la_*(n)$,
$\la^*(n)$, for $n=3$ (lemma~\ref{LemComplexExpTaylorApprox_n=3}),
$n=2$ (lemma~\ref{LemComplexExpTaylorApprox_n=2}), and $n=1$
(lemma~\ref{LemComplexExpTaylorApprox_n=1}).

\begin{lemma}[see \cite{Prawitz1973}]\label{LemPrawitsCos3}
Let $\theta_3^*=3.9958\ldots$ be the unique root of the equation
$$
x^2+2x\sin x +6(\cos x-1)=0,\quad x\in(0,2\pi).
$$
Then
$$
\varkappa_3\equiv \sup_{x>0}\frac{\cos x-1+x^2/2}{x^3}=
\frac{\cos\theta_3^*-1+(\theta_3^*)^2/2}{(\theta_3^*)^3}=
\frac{\theta_3^*-\sin\theta_3^*}{3(\theta_3^*)^2}=0.099161\ldots
$$
\end{lemma}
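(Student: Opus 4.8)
The plan is to study the function $g(x)=(\cos x-1+x^2/2)/x^3$ directly on $(0,\infty)$ and show its supremum is attained at a unique interior critical point. First I would record that the numerator $N(x)=\cos x-1+x^2/2$ satisfies $N(0)=0$ and $N'(x)=x-\sin x>0$ for $x>0$, so $N$ is strictly positive on $(0,\infty)$; combined with the elementary asymptotics $g(x)\sim x/24\to0$ as $x\to0+$ and $g(x)\sim 1/(2x)\to0$ as $x\to+\infty$, this shows that $\varkappa_3=\sup_{x>0}g(x)$ is finite, positive, and attained at some interior point where $g'=0$.

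Next I would identify the critical points. A short computation gives $g'(x)=(xN'(x)-3N(x))/x^4$ together with the identity $xN'(x)-3N(x)=-\tfrac12\bigl(x^2+2x\sin x+6(\cos x-1)\bigr)$; writing $h(x)=x^2+2x\sin x+6(\cos x-1)$ for the function appearing in the statement, this means $g'(x)=-h(x)/(2x^4)$, so the sign of $g'$ is exactly opposite to that of $h$. Thus the entire lemma reduces to proving that $h$ has a single sign change on $(0,\infty)$, from negative to positive, at a point $\theta_3^*\in(0,2\pi)$.

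To control the sign of $h$ I would differentiate repeatedly and exploit the clean identity $h'''(x)=-2x\cos x$, together with $h(0)=h'(0)=h''(0)=0$. On the interval $(0,3\pi/2)$ the sign of $h'''$ is first negative (on $(0,\pi/2)$) then positive (on $(\pi/2,3\pi/2)$); this forces $h''$ to be first negative then positive with a single crossing (using $h''(\pi/2)=2-\pi<0$, $h''(3\pi/2)=2+3\pi>0$), which in turn forces $h'$ to be first negative then positive with a single crossing (using $h'(3\pi/2)=3\pi+4>0$), and finally $h$ itself to be negative on an initial subinterval and positive near $3\pi/2$ (using $h(3\pi/2)=9\pi^2/4-3\pi-6>0$). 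Hence $h$ has exactly one zero $\theta_3^*$ in $(0,3\pi/2)$, with $h<0$ to its left and $h>0$ to its right. For the tail $x\ge 3\pi/2$ I would simply use the crude bound $h(x)\ge x^2-2x-12$, which is positive for $x>1+\sqrt{13}\approx4.606$; since $3\pi/2\approx4.712>1+\sqrt{13}$, this gives $h>0$ on all of $[3\pi/2,\infty)$. Putting these together, $g'>0$ on $(0,\theta_3^*)$ and $g'<0$ on $(\theta_3^*,\infty)$, so $g$ has its unique global maximum at $\theta_3^*\in(0,3\pi/2)\subset(0,2\pi)$, and $\theta_3^*$ is the unique root of $h$ in $(0,2\pi)$.

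It then only remains to read off the value: $\varkappa_3=g(\theta_3^*)=(\cos\theta_3^*-1+(\theta_3^*)^2/2)/(\theta_3^*)^3$, and substituting the critical-point relation $xN'(x)=3N(x)$ evaluated at $x=\theta_3^*$ yields the equivalent form $g(\theta_3^*)=N'(\theta_3^*)/(3(\theta_3^*)^2)=(\theta_3^*-\sin\theta_3^*)/(3(\theta_3^*)^2)$; the numerical value $0.099161\ldots$ follows by solving $h(x)=0$ numerically, which gives $\theta_3^*=3.9958\ldots$. I expect the one genuinely delicate point to be the sign analysis of $h$ — making sure there is really only \emph{one} sign change rather than several — which is precisely why I confine the oscillatory behaviour to $(0,3\pi/2)$ via the explicit formula $h'''(x)=-2x\cos x$ and dispose of the tail with an elementary quadratic lower bound; everything else is routine bookkeeping.
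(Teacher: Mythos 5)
Your proof is correct, and it is worth noting that the paper itself gives no proof of this lemma at all: it is quoted with the attribution ``see Prawitz (1973)'', so your argument supplies a self-contained proof of a cited result rather than replicating an in-paper one. All of your computations check out: $N'(x)=x-\sin x>0$ gives positivity of the numerator, the identity $xN'(x)-3N(x)=-\tfrac12\bigl(x^2+2x\sin x+6(\cos x-1)\bigr)$ is exact, so $g'(x)=-h(x)/(2x^4)$ and the problem does reduce to a single sign change of $h$; the chain $h(0)=h'(0)=h''(0)=0$, $h'''(x)=-2x\cos x$, with the checkpoints $h''(\pi/2)=2-\pi<0$, $h''(3\pi/2)=2+3\pi>0$, $h'(3\pi/2)=3\pi+4>0$, $h(3\pi/2)=9\pi^2/4-3\pi-6>0$ is verified correctly and does force exactly one zero of $h$ in $(0,3\pi/2)$ with the right sign pattern; and the tail bound $h(x)\ge x^2-2x-12>0$ for $x\ge 3\pi/2>1+\sqrt{13}$ closes the argument, after which the critical-point relation $\theta_3^*N'(\theta_3^*)=3N(\theta_3^*)$ yields the equality $\varkappa_3=(\theta_3^*-\sin\theta_3^*)/(3(\theta_3^*)^2)$ stated in the lemma. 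Your method --- reduce the extremal problem to a sign analysis of an auxiliary trigonometric polynomial via repeated differentiation, then kill the tail with a crude algebraic bound --- is exactly the style the author uses to prove the analogous Lemmas 2--4 (for $q_3$, $q_2$, $q_1$), so your proof fits the paper's methodology while filling in the one result it outsources; the only component taken on faith is the numerical value $\theta_3^*=3.9958\ldots$, which is as in the paper and is harmless since uniqueness of the root has been established analytically.
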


\begin{lemma}\label{LemComplexExpTaylorApprox_n=3}
Let
$$
\lambda^* =
6\frac{\theta_3^*-\sin\theta_3^*}{(\theta_3^*)^3}=0.4466\ldots\ .
$$
For $3/8<\la\le\la^*$ by $\theta_3(\la)\in(0,2\pi)$ denote the
unique root of the equation
$$
2\cos x(\la x^4-18x^2+36)-6x\sin x(x^2(\la+1)-12) -(3-4\la)x^4-72=0,
\quad x\in(0,2\pi),
$$
and $\theta_3(\la)=0$ for $0\le\la\le3/8$. Then
$\theta_3(\la^*)=\theta_3^*$, $q_3(\la)=1-\la$ for $0\le\la\le3/8$
and
$$
q_3(\la)=\frac6{x^3}\sqrt{\Big(\cos x-1+\frac{x^2}2\Big)^2
+\Big(\sin x -x+\frac{\la
x^3}6\Big)^2}\,\bigg|_{x=\theta_3(\la)},\quad \frac38<\la\le\la^*.
$$
Moreover, the function $\la+q_3(\la)$ is strictly increasing for
$3/8<\la\le\la^*$, the function $q_3(\la)$ is strictly decreasing
for $0\le\la\le\la^*$. In particular,
$$
q_3(3/8)=5/8,\quad \inf_{\la\ge0}q_3(\la)=
q_3(\la^*)=6\cdot\varkappa_3 =0.594971\ldots\ .
$$
\end{lemma}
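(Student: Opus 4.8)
The plan is to analyze the function
$$
\Phi_\la(x)=\frac{36}{x^6}\Big[\Big(\cos x-1+\tfrac{x^2}2\Big)^2+\Big(\sin x-x+\tfrac{\la x^3}6\Big)^2\Big],
$$
so that $q_3(\la)=\sqrt{\sup_{x>0}\Phi_\la(x)}$, and to locate its supremum. First I would record the small-$x$ and large-$x$ asymptotics: as $x\to0+$, Taylor-expanding $\cos x-1+x^2/2=x^4/24+O(x^6)$ and $\sin x-x+\la x^3/6=(\la-1)x^3/6+O(x^5)$ gives $\Phi_\la(x)\to(1-\la)^2$; as $x\to\infty$, $\Phi_\la(x)\to\la^2$. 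Hence for $0\le\la\le1/2$ the boundary values are $(1-\la)^2\ge\la^2$, so any interior critical point must be compared against $(1-\la)^2$. Setting $\Phi_\la'(x)=0$ and clearing denominators, a direct (if tedious) differentiation yields exactly the quartic-trigonometric equation
$$
2\cos x\,(\la x^4-18x^2+36)-6x\sin x\,(x^2(\la+1)-12)-(3-4\la)x^4-72=0,
$$
whose relevant root in $(0,2\pi)$ I would call $\theta_3(\la)$; this is the step where the algebra is heaviest, and I expect it to be the main obstacle—one must verify that the stationarity condition for $\Phi_\la$ really reduces to this polynomial-in-$(x,\cos x,\sin x)$ identity and that exactly one root lies in $(0,2\pi)$ and furnishes the maximum (a sign analysis of $\Phi_\la'$, or of the displayed left-hand side, on $(0,2\pi)$ and on $(2\pi,\infty)$).

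Next I would treat the threshold $\la=3/8$. For $0\le\la\le3/8$, Prawitz's inequality~\eqref{ComplexExpTaylorApproxPrawitz} with $n=3$ gives $q_3(\la)\le 1-\la$, while the lower bound $q_3(\la)\ge|1-\la|\vee\la=1-\la$ noted in the text forces $q_3(\la)=1-\la$; correspondingly the supremum in $\Phi_\la$ is the limiting value $(1-\la)^2$, attained as $x\to0+$, which is consistent with declaring $\theta_3(\la)=0$ there. In particular $q_3(3/8)=5/8$. For $\la$ slightly above $3/8$ one checks, by expanding $\Phi_\la$ near $x=0$ to the next order, that the second-order term in $x^2$ becomes positive precisely when $\la>3/8$ (the coefficient being a linear function of $\la$ vanishing at $3/8$); hence an interior maximizer $\theta_3(\la)>0$ detaches from the origin exactly at $\la=3/8$, and $q_3(\la)=\sqrt{\Phi_\la(\theta_3(\la))}$ for $3/8<\la\le\la^*$.

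Then I would establish monotonicity. For $q_3(\la)$ decreasing on $[0,\la^*]$: on $[0,3/8]$ it equals $1-\la$, clearly decreasing; on $(3/8,\la^*]$, since $\theta_3(\la)$ is a maximizer, the envelope theorem gives $\frac{d}{d\la}q_3(\la)^2=\frac{\partial}{\partial\la}\Phi_\la(x)\big|_{x=\theta_3(\la)}=\frac{12}{x^3}\big(\sin x-x+\tfrac{\la x^3}6\big)\big|_{x=\theta_3(\la)}$, and I would show this inner factor is negative for $x\in(0,2\pi)$ and $\la\le\la^*$ (equivalently $\sin x-x+\la x^3/6<0$, which holds since $\sin x-x<0$ on $(0,\pi]$ and a direct estimate handles $(\pi,2\pi)$ for $\la\le\la^*$), so $q_3$ strictly decreases. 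For $\la+q_3(\la)$ strictly increasing on $(3/8,\la^*]$: differentiating, $1+q_3'(\la)=1+\frac{6}{x^3}(\sin x-x+\la x^3/6)/q_3(\la)\big|_{x=\theta_3(\la)}$, and since $q_3(\la)\ge\la$ one reduces positivity to $x^3\la^2+6\la(\sin x-x)+\dots>0$ type estimates at $x=\theta_3(\la)$; the endpoint $\la^*$ is pinned down by imposing that $\theta_3(\la^*)$ satisfy simultaneously the stationarity equation and $\la^*=6(\theta_3^*-\sin\theta_3^*)/(\theta_3^*)^3$, which is exactly the condition that the imaginary part $\sin x-x+\la x^3/6$ vanishes at the maximizer—so there $\Phi_{\la^*}(\theta_3^*)=\big(\frac{6}{x^3}(\cos x-1+x^2/2)\big)^2\big|_{x=\theta_3^*}=(6\varkappa_3)^2$ by Lemma~\ref{LemPrawitsCos3}. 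Substituting $\la=\la^*$ into the degree-four stationarity equation and using $\la^* x^3=6(x-\sin x)$ collapses it to $x^2+2x\sin x+6(\cos x-1)=0$, i.e.\ the defining equation of $\theta_3^*$; this confirms $\theta_3(\la^*)=\theta_3^*$ and yields $q_3(\la^*)=6\varkappa_3=0.594971\ldots$, which by the already-proven monotonicity equals $\inf_{\la\ge0}q_3(\la)$.
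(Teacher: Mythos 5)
Your skeleton matches the paper in several places (the Prawitz bound plus the $x\to0+$ lower bound on $[0,3/8]$, the stationarity equation, the verification that $\theta_3(\la^*)=\theta_3^*$ and that the imaginary part vanishes there so $q_3(\la^*)=6\varkappa_3$), but the core of the lemma is left unproved. You explicitly defer ``the main obstacle'': showing that the equation $2\cos x(\la x^4-18x^2+36)-6x\sin x(x^2(\la+1)-12)-(3-4\la)x^4-72=0$ has a \emph{unique} root in $(0,2\pi)$ and that this root is the \emph{global} maximizer of $f(x,\la)=6x^{-3}\sqrt{h(x,\la)}$ over all $x>0$ --- in particular that no competing maxima occur on $(2\pi,\infty)$, where $f\to\la$. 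This is precisely what the paper spends most of its proof on: an iterated sign analysis of $f_1,f_2,f_2',f_2'',f_2'''$ on each interval $(2\pi m,2\pi(m+1)]$, with the case split $3/8<\la<3/7$, $\la=3/7$, $3/7<\la<1/2$, followed by explicit estimates (splitting $(3\pi/2,2\pi)+2\pi m$ at $\pi/4$) showing $f_1<0$ for $x>2\pi$. Without this step $\theta_3(\la)$ is not even well defined and the displayed formula for $q_3(\la)$ is unestablished; your local expansion $\Phi_\la(x)=(1-\la)^2+\tfrac{8\la-3}{80}x^2+O(x^4)$ is a nice consistency check at $\la=3/8$ but does not substitute for it.

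Your monotonicity argument also contains a false intermediate claim. It is not true that $\sin x-x+\la x^3/6<0$ on all of $(0,2\pi)$ for $\la\le\la^*$: this quantity is positive near $x=2\pi$ for every $\la>6/(4\pi^2)\approx0.152$ (e.g.\ at $x=6$, $\la=0.4$ it equals $\approx 8.1$), so no ``direct estimate on $(\pi,2\pi)$'' can work. What you actually need is negativity at the maximizer $x=\theta_3(\la)$ only, and that requires locating $\theta_3(\la)$ relative to the zero of $\sin x-x+\la x^3/6$ (e.g.\ proving $\theta_3(\la)\le\theta_3^*$), which you have not done; moreover the envelope/Danskin differentiation of $q_3^2$ itself presupposes the uniqueness and continuity of the maximizer, i.e.\ the deferred step. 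The paper sidesteps all of this: it shows $f(x,\cdot)$ is strictly convex in $\la$ (via $f''_{\la\la}>0$), notes that for $\la\in(3/8,\la^*]$ the supremum is attained on a compact set bounded away from $0$, deduces strict convexity of $q_3$ on $(3/8,\la^*]$, and then gets strict decrease from $q_3(\la)\ge6\varkappa_3=q_3(\la^*)$ and strict increase of $\la+q_3(\la)$ from strict convexity together with $\la+q_3(\la)\ge1$ --- no differentiation of $q_3$ and no sign analysis of the imaginary part at the maximizer is needed. Finally, note that ``$\inf_{\la\ge0}q_3(\la)=q_3(\la^*)$'' needs the bound $q_3(\la)\ge6\sup_{x>0}(\cos x-1+x^2/2)/x^3=6\varkappa_3$ for \emph{all} $\la\ge0$ (discarding the imaginary part), not just monotonicity on $[0,\la^*]$.
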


\begin{proof} Denote
$$
h(x)=h(x,\la)= \Big(\cos x-1+\frac{x^2}2\Big)^2 +\Big(\sin x
-x+\frac{\la x^3}6\Big)^2,\quad x>0,
$$
$$
f(x)=f(x,\la)=6x^{-3}\sqrt{h(x,\la)},\quad x>0.
$$
Then $h(x,\la)>0$, $f(x,\la)>0$ for all $x>0$, and
$$
q_3(\la)=\sup\limits_{x>0}f(x,\la),\quad \la\ge0.
$$
From a result of~\cite{Prawitz1991} it follows that
$q_3(3/8)\le5/8$, hence, for all $0\le\la\le3/8$
$$
q_3(\la)\le q_3(3/8) +3/8-\la\le 5/8+3/8-\la=1-\la.
$$
Since $q_3(\la)\ge\lim\limits_{x\to0+}f(x,\la)=|1-\la|,$ $\la\ge0$,
we conclude that $q_3(\la)=1-\la$, $\theta_3(\la)=0$ for all
$0\le\la\le3/8$.

Now consider $3/8<\la<1/2$. For all $x>0$ we have
$$
f'(x)=3\frac{xh'(x)-6h(x)}{x^4\sqrt{h(x)}},
$$
\begin{eqnarray*}
h'(x)&=&2(\cos x-1+x^2/2)(x-\sin x) +2(\sin x-x+\la x^3/6)(\cos x-1
+kx^2/2),
\\
f_1(x)&\equiv& f'(x)\cdot2x^4 \sqrt{h(x)} =6(xh'(x)-6h(x))=
\\
& =& 2\cos x(\la x^4-18x^2+36) -6x\sin x(x^2(\la+1)-12) +4\la
x^4-3x^4-72,
\\
f_2(x)&\equiv& \frac{f_1'(x)}{2x^2} = (\la-3)x\cos x - \sin x (\la
x^2+9\la-9) +8\la x-6x,
\\
f_2'(x)&=& 3(1-\la)x\sin x + (6-8\la-\la x^2)\cos x +8\la-6,
\\
f_2''(x)&=& \sin x(\la x^2+5\la-3) + (3-5\la)x\cos x,
\\
f_2'''(x)&=& x(7\la\sin x -3\sin x +\la x \cos x) = x\cos x\cdot
g(x),\quad g(x)=\la x + (7\la-3)\tan x.
\end{eqnarray*}
Evidently, $f_1(\theta_3(\la))\equiv0$ by the definition of
$\theta_3(\la)$. Split the domain $x>0$ into the non-overlapping
intervals $x\in(2\pi m,2\pi(m+1)]\equiv(0,2\pi]+2\pi m$,
$m=0,1,2,\ldots,$ where
$$
A+c=\{x\in\R\colon x=a+c,\ a\in A\},\quad A\subset\R,\ c\in\R,
$$
and consider the function $f(x)$ and its derivatives on each of
these intervals. The function $\cos x$ has the zeros $\pi/2+2\pi m$
and $3\pi/2+2\pi m$ in the interval $x\in(0,2\pi]+2\pi m$, which
might be the zeros of the function $f_2'''(x)$ as well. However,
$$
f_2'''(\pi/2+2\pi m)=(\pi/2+2\pi m)\cdot(7\la-3)\neq0,\quad
f_2'''(3\pi/2+2\pi m)=(3\pi/2+2\pi m)\cdot(3-7\la)\neq0
$$
for all $\la\neq3/7$, thus all the roots of the equation
$f_2'''(x)=0$ coincide with those of the function $g(x)$, if
$\la\neq3/7$. Now consider three cases for possible values of
$\la\in(3/8,1/2)$:

\begin{enumerate}

\item if $3/8<\la<3/7$, then the function $g(x)$ vanishes in
the points $x_1\in(0,\pi/2)+2\pi m$ and $x_2\in(\pi,3\pi/2)+2\pi m$
changing its sign from $+$ to $-$. Since $\cos x_1>0$, $\cos x_2<0$,
the function $f_2'''(x)$ changes its sign on each of the intervals
$x\in(0,2\pi]+2\pi m$, $m\ge0,$ only in the two points
$x_1\in(0,\pi/2)$ (from $+$ to $-$) and $x_2\in(\pi,3\pi/2)$ (from
$-$ to $+$).

\item if $\la=3/7$, then $f_2'''(x)=\la x^2\cos x$ changes its sign  in
the two points: $x_1=\pi/2+2\pi m$ (from $+$ to $-$) and
$x_2=3\pi/2+2\pi m$ (from $-$ to $+$).

\item if $3/7<\la<1/2$, then the function $g(x)$ vanishes in
the points $x_1\in(\pi/2,\pi)+2\pi m$ and $x_2\in(3\pi/2,2\pi)+2\pi
m$ changing its sign from $-$ to $+$. Since $\cos x_1<0$, $\cos
x_2>0$, the function $f_2'''(x)$ changes its sign on each of the
intervals $x\in(0,2\pi]+2\pi m$, $m\ge0,$ only in the two points
$x_1\in(\pi/2,\pi)+2\pi m$ (from $+$ to $-$) and
$x_2\in(3\pi/2,2\pi)+2\pi m$ (from $-$ to $+$).

\end{enumerate}

Summarizing what was said above we conclude that on each of the
intervals $x\in(0,2\pi]+2\pi m$, $m\ge0,$ the function $f_2'''(x)$
changes its sign exactly in two points $x_1\in(0,\pi)+2\pi m$ (from
$+$ to $-$) and $x_2\in(\pi,2\pi)+2\pi m$ (from $-$ to $+$). Thus,
$x_1$ is the point of maximum and $x_2$ is the point of minimum of
the function
$$
f_2''(x)=\sin x(\la x^2+5\la-3) + (3-5\la)x\cos x.
$$
We have
$$
f_2''(0)=0,\quad f_2''(2\pi m)=2\pi m(3-5\la)>0,\ m\ge1,
$$
$$
f_2''(\pi+2\pi m)= -(3-5\la)(\pi+2\pi m)<0,
$$
hence, $f_2''(x_1)>0$, $f_2''(x_2)<0$, and $f_2''(x)$ changes its
sign exactly in two points $x_3\in(x_1,\pi)+2\pi m
\subset(0,\pi)+2\pi m$ (from $+$ to~$-$) and $x_4\in(\pi,2\pi)+2\pi
m$ (from $-$ to~$+$).

Thus, the function
$$
f_2'(x)= 3(1-\la)x\sin x + (6-8\la-\la x^2)\cos x +8\la-6
$$
has exactly two stationary points on each of the intervals
$(0,2\pi]+2\pi m$, $m\ge0$: $x_3\in(0,\pi)+2\pi m$ which is the
point of maximum and $x_4\in(\pi,2\pi)+2\pi m$ which is the point of
minimum. For $m=0$ we have $f_2'(0)=0$, $f_2'(2\pi)=-\la(2\pi)^2<0$,
consequently, $f_2'(x_3)>0$, $f_2'(x_4)<0$, and the function
$f_2'(x)$ changes its sign in a unique point $x_6\in(0,2\pi)$ (from
$+$ to $-$). For $m\ge1$ we have
$$
f_2'(2\pi m)=-\la (2\pi m)^2<0,
$$
$$
f_2'(\pi+2\pi m) = \la(16+(\pi+2\pi m)^2)-12\ge
3/8\cdot(16+9\pi^2)-12>0,
$$
consequently, $f_2'(x_3)>0$, $f_2'(x_4)<0$, and $f_2'(x)$ changes
its sign exactly in two points $y_5\in(0,\pi)+2\pi m$ (from $-$ to
$+$) and $y_6\in(\pi,2\pi)+2\pi m$ (from $+$ to $-$).

Thus, the function
$$
f_2(x)\equiv \frac{f_1'(x)}{2x^2} = (\la-3)x\cos x - \sin x (\la
x^2+9\la-9) +8\la x-6x,
$$
has a unique stationary point $x_6$ on the interval $(0,2\pi]$ which
is the point of maximum and exactly two stationary points on each of
the intervals $(0,2\pi]+2\pi m$ with $m\ge1$: $y_5\in(0,\pi)+2\pi m$
which is the point of minimum and $y_6\in(\pi,2\pi)+2\pi m$ which is
the point of maximum. Since
$$
f_2(0)=0,\quad f_2(2\pi m) = 18\pi m(\la-1) <0,\ m\ge1,
$$
we conclude that $f_2(x_6)>0$ and the function $f_1'(x)=2x^2f_2(x)$
changes its sign on the interval $(0,2\pi]$ in a unique point
$x_8\in(x_6,2\pi)\in(0,2\pi)$ (from $+$ to $-$). With $m\ge1$ we
have
\begin{multline*}
f_2(3\pi/2+2\pi m) =\la(x^2+8x+9)-6x-9\big|_{x=3\pi/2+2\pi m}\ge
\\
\ge 3/8\cdot(x^2+8x+9)-6x-9\big|_{x=3\pi/2+2\pi m}
=3/8\cdot((3\pi/2+2\pi m-4)^2-31)\ge
\\
\ge  3/8\cdot((7\pi/2-4)^2-31)>0,
\end{multline*}
and hence, the function $f_1'(x)=2x^2f_2(x)$ changes its sign
exactly in two points on each of the intervals $(0,2\pi]+2\pi m$,
$m\ge1$: $y_7\in(y_5,3\pi/2)+2\pi m\in(0,3\pi/2)+2\pi m$ (from $-$
to $+$) and $y_8\in(3\pi/2,2\pi)+2\pi m$ (from $+$ to $-$).

Thus, the function
$$
f_1(x)=f'(x)\cdot2x^4\sqrt{h(x)} =2\cos x(\la x^4-18x^2+36) -6x\sin
x(x^2(\la+1)-12) +(4\la -3)x^4-72,
$$
where $h(x)>0$, has a unique stationary point $x_8$ on the interval
$(0,2\pi]$ (the point of maximum), and exactly two stationary points
on each of the intervals $(0,2\pi]+2\pi m$ with $m\ge1$:
$y_7\in(0,3\pi/2)+2\pi m$ (the point of minimum) and
$y_8\in(3\pi/2,2\pi)+2\pi m$ (the point of maximum). Since
$$
f_1(0)=0,\quad f_1(2\pi m)=3(2\pi m)^2((2\la-1)(2\pi
m)^2-12)<0,\quad m\ge1,
$$
the function $f'(x)$ has a unique zero within the interval
$(0,2\pi)$, which is the point of maximum of the function $f(x)$ and
coincides with $\theta_3(\la)$.

As regards the domain $x>2\pi$, we are going to prove that
$f_1(x)<0$ for all $x>2\pi$ and $3/8<\la\le\la^*$, implying that the
function $f(x)$ has no maxima for $x>2\pi$ and completing the proof
of the relation $q_3(\la)=6f(\theta_3(\la),\la)$. Since the function
$f_1(x)$ has a unique point of maximum $y_8\in(3\pi/2,2\pi)+2\pi m$
on each of the intervals $(0,2\pi]+2\pi m$, $m\ge1,$ it suffices to
prove that $f_1(x)<0$ for all $x\in(3\pi/2,2\pi)+2\pi m$.

Note that $\cos x>0$, $\sin x<0$ for $x\in(3\pi/2,2\pi)+2\pi
m=(0,\pi/2]+3\pi/2+2\pi m$, and hence for $\la\le\la^*<0.4467$
$$
f_1(x)\le 2\cos x(\la^* x^4-18x^2+36) -6x\sin x(x^2(\la^*+1)-12)
+(4\la^* -3)x^4-72,
$$
moreover, as it can be easily seen, $\la^* x^4-18x^2+36>0$,
$x^2(\la^*+1)-12>0$ for all
$$
x\ge 3\pi/2+2\pi m\ge7\pi/2=10.99\ldots\ .
$$

Split the domain $x\in(0,\pi/2]+3\pi/2+2\pi m$ into two intervals:
$x\in(0,\pi/4)+3\pi/2+2\pi m$ and $x\in[\pi/4,\pi/2]+3\pi/2+2\pi m$
and examine the function $f_1(x)$ on each of them. For
$x\in(0,\pi/4)+3\pi/2+2\pi m$ we have $\cos x\le\sqrt2/2$, $\sin
x\ge-1$ and thus
\begin{multline*}
f_1(x)\le \sqrt2(\la^* x^4-18x^2+36) +6x(x^2(\la^*+1)-12) +(4\la^*
-3)x^4-72=
\\
=(\la^*(\sqrt2+4)-3)x^4+6(\la^*+1)x^3-18\sqrt2x^2-72x+36(\sqrt2-2)<
\\
< -x^2\big((3-\la^*(\sqrt2+4))x^2-6(\la^*+1)x+18\sqrt2\big).
\end{multline*}
Since $3-\la^*(\sqrt2+4)>1-1/\sqrt2>0$, now it can be easily seen
that $f_1(x)<0$ for all
$$
x> \frac{3(\la^*+1)
+\sqrt{9(\la^*+1)^2-18\sqrt2(3-\la^*(\sqrt2+4))})}
{3-\la^*(\sqrt2+4)}=10.91\ldots,
$$
in particular, for $x\ge 3\pi/2+2\pi m\ge7\pi/2=10.99\ldots\ .$

For $x\in[\pi/4,\pi/2]+3\pi/2+2\pi m$ we have $\cos x\le1$, $\sin
x\ge-\sqrt2/2$ and thus
\begin{multline*}
f_1(x)\le 2(\la^* x^4-18x^2+36) +3\sqrt2x(x^2(\la^*+1)-12) +(4\la^*
-3)x^4-72=
\\
=3(2\la^*-1)x^4+3\sqrt2(\la^*+1)x^3-36x^2-36\sqrt2x<
\\
< -3x^2\big((1-2\la^*)x^2-\sqrt2(\la^*+1)x+12\big)<0
\end{multline*}
for all $x\in\R$, since the discriminant
$2(\la^*+1)^2-4\cdot12(1-2\la^*)<-0.93$ is negative.

Thus, we have proved that the function $f(x)=f(x,\la)$ attains its
maximal value for $x>0$ at the unique point
$x=\theta_3(\la)\in(0,2\pi)$ for $3/8<\la\le\la^*$ and at the point
$x\to0+$ for $0\le\la\le3/8$.

Now prove that $q_3(\la^*)=6\varkappa_3$. With
$$
\la=\la^*=6\frac{\theta_3^*-\sin\theta_3^*}{(\theta_3^*)^3}
=18\frac{\varkappa_3}{\theta_3^*} = 18\frac{\cos
\theta_3^*-1+(\theta_3^*)^2/2}{(\theta_3^*)^4}
$$
(two last relations following from lemma~\ref{LemPrawitsCos3}), we
have
\begin{eqnarray*}
h(\theta_3^*,\la^*)&=&\varkappa_3^2(\theta_3^*)^6,
\\
h'_x(x,\la^*)|_{x=\theta_3^*}
&=&2(\cos\theta_3^*-1+(\theta_3^*)^2/2) (\theta_3^*-\sin\theta_3^*)+
\\
&&+2(\sin\theta_3^*-\theta_3^*+\la^*(\theta_3^*)^3/6)
(\cos\theta_3^*-1 +\la^*(\theta_3^*)^2/2)=
\\
&=& 2\varkappa_3(\theta_3^*)^3\cdot 3\varkappa_3(\theta_3^*)^2=
6\varkappa_3^2(\theta_3^*)^5,
\\
f_1(\theta_3^*)/6&=&
xh'_x(x,\la)|_{x=\theta_3^*}-6h(\theta_3^*,\la)=0.
\end{eqnarray*}
By virtue of the uniqueness of the root $\theta_3(\la)$ of the
equation $f_1(x)=0$, which is equivalent to $f'_x(x,\la)=0$ within
the interval $(0,2\pi)$, we conclude that
$\theta_3(\la^*)=\theta_3^*$ so that
$$
q_3(\la^*)=f(\theta_3(\la^*),\la^*)=f(\theta_3^*,\la^*)=
6\frac{\cos\theta_3^*-1+(\theta_3^*)^2/2}{(\theta_3^*)^3}
=6\varkappa_3.
$$

Now prove the declared properties of the functions $q_3(\la)$,
$\la+q_3(\la)$. Since
$$
f''_{\la\la}(x,\la)=\frac{x^3(\cos x-1+x^2/2)^2}{6 \big((\cos
x-1+{x^2}/2)^2 +(\sin x -x+{\la x^3}/6)^2\big)^{3/2}}
>0,\quad 0<x<\infty,
$$
the function $f(x,\la)$ is strictly convex in $\lambda$ for all
$x\in(0,\infty)$. As it follows from what was proved, the least
upper bound in the definition of $q_3(\la)$ is attained for all
$3/8<\la\le\la^*$ at a finite point $x=\theta_3(\la)$ separated from
zero:
$$
q_3(\la)=\sup_{x>0}f(x,\la)=\max_{\theta_3(\la)\le x\le2\pi}
f(x,\la),\quad \theta_3(\la)>0,\quad 3/8<\la\le\la^*,
$$
hence for all $\la_1,\la_2\in(3/8,\la^*]$ and $0\le\alpha\le1$ we
have
\begin{multline*}
q_3(\alpha\la_1+(1-\alpha)\la_2) =\max_{\theta_3(\la)\le x\le2\pi}
f(x,\alpha\la_1+(1-\alpha)\la_2)<
\\
< \max_{\theta_3(\la)\le x\le2\pi} \big(\alpha f(x,\la_1)
+(1-\alpha)f(x,\la_2)\big)\le
\\
\le\alpha\sup_{x>0}f(x,\la_1) +(1-\alpha)\sup_{x>0}f(x,\la_2)=
\alpha q_3(\la_1) +(1-\alpha)q_3(\la_2),
\end{multline*}
i.\,e. the function $q_3(\la)$ is strictly convex for
$3/8<\la\le\la^*$ as well. Since
$$
q_3(\la)\ge 6\sup_{x>0}(\cos x-1+x^2/2)/x^3
=6\varkappa_3=q_3(\la^*),
$$
$\la=\la^*$ being the unique point of minimum  of the function
$q_3(\la)$ on the interval $3/8<\la\le\la^*$, the function
$q_3(\la)$ should decrease strictly monotonically for
$3/8<\la\le\la^*$. For $0\le\la\le3/8$, obviously, the function
$q_3(\la)=1-\la$ is strictly decreasing.

The function $\la+q_3(\la)$ is strictly convex for $3/8<\la\le\la^*$
as a sum of a convex and a strictly convex functions, hence, it
cannot be constant on any subinterval of the interval
$3/8<\la\le\la^*$. On the other hand, $\la+q_3(\la)\ge1$ for all
$0\le\la\le\la^*$, thus, $\la+q_3(\la)$ should strictly increase for
$3/8<\la\le\la^*$.
\end{proof}


%

\begin{lemma}\label{LemComplexExpTaylorApprox_n=2}
For $1/3<\la\le4\pi^{-2}=0.4052\ldots$ let $\theta_2(\la)\in(0,\pi]$
be the unique root of the equation
$$
x(8-\la x^2)\sin x +4(\la x^2+x^2-4)\sin^2\frac x2 - 4x^2=0, \quad
0<x\le\pi,
$$
and let $\theta_2(\la)=0$ for $0\le\la\le1/3$. Then
$\theta_2(4\pi^{-2})=\pi$, $q_2(\la)=1-\la$ for $0\le\la\le1/3$ and
$$
q_2(\la)=2\sqrt{\Big(\frac{\cos x-1+{\la x^2}/2}{x^2}\Big)^2
+\Big(\frac{x-\sin x}{x^2}\Big)^2}\,\bigg|_{x=\theta_2(\la)}, \quad
\frac13<\la\le\frac{4}{\pi^{2}}.
$$
Moreover, the function $\la+q_2(\la)$ is strictly increasing for
$1/3<\la\le4\pi^{-2}$, the function $q_2(\la)$ is strictly
decreasing for $0\le\la\le4\pi^{-2}$. In particular,
$$
q_2(1/3)=2/3,\quad \inf\limits_{\la\ge0}q_2(\la)
=q_2(4\pi^{-2})=2\sup_{x>0}\frac{x-\sin x}{x^2}=\frac2\pi
=0.636619\ldots\ .
$$
\end{lemma}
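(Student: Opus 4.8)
The plan is to mirror the proof of Lemma~\ref{LemComplexExpTaylorApprox_n=3}, adapted to $n=2$. Put
$$
h(x)=h(x,\la)=\Big(\cos x-1+\frac{\la x^2}2\Big)^2+(x-\sin x)^2,\qquad f(x)=f(x,\la)=\frac2{x^2}\sqrt{h(x,\la)},\quad x>0,
$$
so that $h(x,\la)>0$, $f(x,\la)>0$ for $x>0$, and $q_2(\la)=\sup_{x>0}f(x,\la)$, $\la\ge0$. Since $\lim_{x\to0+}f(x,\la)=|1-\la|$ while, by inequality~\eqref{ComplexExpTaylorApproxPrawitz} with $n=2$, $q_2(1/3)\le2/3$, for $0\le\la\le1/3$ one gets $q_2(\la)\le q_2(1/3)+(1/3-\la)\le1-\la$; combined with $q_2(\la)\ge\lim_{x\to0+}f(x,\la)=1-\la$ this yields $q_2(\la)=1-\la$ and $\theta_2(\la)=0$ on $[0,1/3]$. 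Henceforth take $\la\in(1/3,4\pi^{-2}]$.

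The core is a chain of differentiations, parallel to the one for $n=3$ but one step shorter. A direct computation gives $f'(x)=f_1(x)/(x^3\sqrt{h(x)})$, where
$$
f_1(x)=xh'(x)-4h(x)=x(8-\la x^2)\sin x+4(\la x^2+x^2-4)\sin^2\frac x2-4x^2,
$$
so the stationary points of $f$ on $(0,\infty)$ are exactly the zeros of $f_1$, and the equation $f_1=0$ is the one in the statement; further,
$$
f_1'(x)=xf_2(x),\qquad f_2(x)=(2-\la)x\sin x-4(1-\la)+\big(4(1-\la)-\la x^2\big)\cos x,
$$
$$
f_2'(x)=(\la x^2+3\la-2)\sin x+(2-3\la)x\cos x,\qquad f_2''(x)=x\cos x\cdot g(x),\quad g(x)=\la x+(5\la-2)\tan x.
$$
As in Lemma~\ref{LemComplexExpTaylorApprox_n=3} one analyses $g$ on $(0,2\pi]$, splitting into the three cases $1/3<\la<2/5$, $\la=2/5$, $2/5<\la\le4\pi^{-2}$ according to the sign of $5\la-2$, and finds in each of them that $f_2''$ changes sign on $(0,2\pi]$ exactly twice: once in $(0,\pi)$ from $+$ to $-$ and once in $(\pi,2\pi)$ from $-$ to $+$. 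Peeling the chain back up with the help of the boundary values $f_2(0)=f_2'(0)=0$, $f_2'(\pi)=-(2-3\la)\pi<0$, $f_2'(2\pi)=2\pi(2-3\la)>0$, $f_2(\pi)=\la(8+\pi^2)-8<0$ (as $\la\le4\pi^{-2}<8/(8+\pi^2)$), $f_2(2\pi)=-4\la\pi^2<0$, one obtains that $f_2'$ has exactly two sign changes on $(0,2\pi]$, $f_2$ exactly one (from $+$ to $-$), and therefore $f_1$ exactly one as well. Since $f_1(0)=0$ and $f_1>0$ for small $x>0$ (the expansion $f(x,\la)=(1-\la)+\frac{3\la-1}{36(1-\la)}\,x^2+O(x^4)$ has a positive coefficient at $x^2$), whereas $f_1(\pi)=4(\la\pi^2-4)\le0$ and $f_1(2\pi)=-16\pi^2<0$, this unique zero $\theta_2(\la)$ lies in $(0,\pi]$, with $f_1>0$ on $(0,\theta_2(\la))$ and $f_1<0$ on $(\theta_2(\la),2\pi]$; moreover $\theta_2(4\pi^{-2})=\pi$, since $f_1(\pi)=0$ precisely when $\la=4\pi^{-2}$.

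It remains to exclude larger values of $f$ for $x>2\pi$. There $\la x^2/2\ge2\pi^2\la>2$, so $0<\cos x-1+\la x^2/2\le\la x^2/2$, whence, using also $|x-\sin x|\le x+1$,
$$
f(x,\la)\le\frac2{x^2}\sqrt{\frac{\la^2x^4}4+(x+1)^2}=\sqrt{\la^2+\frac{4(x+1)^2}{x^4}}\le\sqrt{\la^2+\frac{4(2\pi+1)^2}{(2\pi)^4}}<\frac2\pi\le q_2(\la),
$$
where $(x+1)^2/x^4$ is decreasing on $(0,\infty)$, the penultimate inequality reduces via $\la\le4\pi^{-2}$ to $64+(2\pi+1)^2<16\pi^2$, and the last one holds because $q_2(\la)\ge2/\pi$ by inequality~\eqref{ComplexExpTaylorMyApprox_qLowBound} for $n=2$ together with $\sup_{x>0}(x-\sin x)/x^2=1/\pi$ (attained at $x=\pi$). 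Thus $f$ attains its supremum over $(0,\infty)$ uniquely at $x=\theta_2(\la)$, i.e. $q_2(\la)=f(\theta_2(\la),\la)$, which is the asserted formula; in particular $q_2(4\pi^{-2})=f(\pi,4\pi^{-2})=\frac2{\pi^2}\sqrt{0+\pi^2}=\frac2\pi=\inf_{\la\ge0}q_2(\la)$ and $q_2(1/3)=2/3$.

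For the monotonicity and convexity assertions I would argue exactly as for $n=3$. From
$$
\frac{\partial^2f}{\partial\la^2}(x,\la)=\frac{x^2(x-\sin x)^2}{2\,h(x,\la)^{3/2}}>0,\quad x>0,
$$
each $f(x,\cdot)$ is strictly convex; since $q_2(\la)>1-\la$ on $(1/3,4\pi^{-2}]$ (again by the expansion above) and $q_2$ is continuous, on every compact subinterval of $(1/3,4\pi^{-2}]$ the maximizer $\theta_2(\la)$ stays in a fixed compact subinterval of $(0,\pi]$, while $f(x,\la)<q_2(\la)$ for $x>2\pi$; hence the supremum defining $q_2(\la)$ is a maximum over a fixed compact $x$-interval, and $q_2$ is strictly convex on $(1/3,4\pi^{-2}]$. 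Together with $q_2(\la)\ge2/\pi=q_2(4\pi^{-2})$ this forces $q_2$ to decrease strictly on $(1/3,4\pi^{-2}]$, and, since $q_2(\la)=1-\la$ decreases on $[0,1/3]$ and $q_2$ is continuous, $q_2$ decreases strictly on all of $[0,4\pi^{-2}]$. The function $\la+q_2(\la)$ is strictly convex on $(1/3,4\pi^{-2}]$, satisfies $\la+q_2(\la)\ge1$ with equality at $\la=1/3$, and therefore increases strictly there, while it equals $1$ on $[0,1/3]$; so $\la_*(2)=1/3$ and $\la^*(2)=4\pi^{-2}$. The main obstacle is the sign analysis of the differentiation chain --- carrying the three $\la$-cases through $g$, $f_2''$, $f_2'$, $f_2$, $f_1$ and verifying that $f_1$ has exactly one zero on $(0,2\pi]$, that it lies in $(0,\pi]$, and that $f_1<0$ on the rest of $(0,2\pi]$; the large-$x$ estimate and the convexity argument are routine.
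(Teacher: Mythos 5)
Your proposal is correct and follows essentially the same route as the paper's own proof: the same function $f(x,\la)$, the identical differentiation chain $f_1=xh'-4h$, $f_2=f_1'/x$, $f_2'$, $f_2''=x\cos x\cdot g(x)$ with $g(x)=\la x+(5\la-2)\tan x$, the same three-case analysis in $\la$ versus $2/5$, the same boundary values, and the same convexity-in-$\la$ argument for the monotonicity of $q_2$ and $\la+q_2$. The only deviations are cosmetic --- a slightly different (but valid) bound for $x>2\pi$ and the optional small-$x$ expansion of $f(x,\la)$ --- so nothing needs to change.
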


\begin{proof}
Denote
$$
f(x)=f(x,\la)=2\sqrt{\Big(\frac{\cos x-1+{\la x^2}/2}{x^2} \Big)^2
+\Big(\frac{x-\sin x}{x^2}\Big)^2},\quad x>0.
$$
Then $f(x,\la)>0$, $x>0$, and
$$
q_2(\la)=\sup\limits_{x>0}f(x,\la),\quad \la\ge0.
$$
From the result of~\cite{Prawitz1991} it follows that
$q_2(1/3)\le2/3$, hence, for all $0\le\la\le1/3$
$$
q_2(\la)\le q_2(1/3) +1/3-\la\le 2/3+1/3-\la=1-\la.
$$
Since $q_2(\la)\ge\lim\limits_{x\to0+}f(x,\la)=|1-\la|,$ $\la\ge0$,
we conclude that $q_2(\la)=1-\la$ with $\theta_2(\la)=0$ for all
$0\le\la\le1/3$.

Now assume that $1/3<\la\le4\pi^{-2}$. Consider two cases of
possible values of $x$:

\begin{enumerate}

\item
$0<x<2\pi$. We have
\begin{eqnarray*}
f_1(x)&\equiv& f'(x)\cdot x^5f(x)/2= x(8-\la x^2)\sin x +4(\la
x^2+x^2-4)\sin^2(x/2) - 4x^2,
\\
f_2(x)&\equiv& {f_1'(x)}/{x} = (4-4\la-\la x^2)\cos x +(2-\la)x\sin
x +4(\la-1),
\\
f_2'(x)&=& (2-3\la)x\cos x + (\la x^2+3\la-2)\sin x,
\\
f_2''(x)&=& \la x^2\cos x+(5\la-2)x\sin x = x\cos x\cdot g(x),\quad
g(x)=\la x + (5\la-2)\tan x.
\end{eqnarray*}

Obviously, $f_1(\theta_2(\la))\equiv0$ by the definition of
$\theta_2(\la)$. The function $\cos x$ has the zeros $\pi/2$ and
$3\pi/2$ within the interval $x\in(0,2\pi)$, which might be the
zeros of the function $f_2''(x)$ as well. However,
$$
f_2''(\pi/2)=(5\la-2)\cdot\pi/2\neq0,\quad
f_2''(3\pi/2)=(2-5\la)\cdot3\pi/2\neq0
$$
for all $\la\neq2/5$, thus all the roots of the equation
$f_2''(x)=0$ coincide with those of the function $g(x)$, if
$\la\neq2/5$. Now consider three cases of possible values of
$\la\in(1/3,4\pi^{-2}]$:

\begin{enumerate}

\item
if $1/3<\la<2/5$, then the function $g(x)$ vanishes in some points
$x_1\in(0,\pi/2)$ and $x_2\in(\pi,3\pi/2)$ where it changes its sign
from $+$ to $-$. Since $\cos x_1>0$, $\cos x_2<0$, the function
$f_2''(x)$ changes its sign only in two points $x_1\in(0,\pi/2)$
(from $+$ to $-$) and $x_2\in(\pi,3\pi/2)$ (from $-$ to $+$).

\item
if $\la=2/5$, then $f_2''(x)=\la x^2\cos x$ changes its sign in two
points $x_1=\pi/2$ (from $+$ to $-$) and $x_2=3\pi/2$ (from $-$ to
$+$).

\item
if $2/5<\la\le4\pi^{-2}$, then the function $g(x)$ vanishes in some
points $x_1\in(\pi/2,\pi)$, $x_2\in(3\pi/2,\pi,2\pi)$ where it
changes its sign from $-$ to $+$. Since $\cos x_1<0$, $\cos x_2>0$,
the function $f_2''(x)$ changes its sign only in two points
$x_1\in(\pi/2,\pi)$ (from $+$ to $-$) and $x_2\in(3\pi/2,2\pi)$
(from $-$ to $+$).

\end{enumerate}

Summarizing what was said above we conclude that the function
$f_2''(x)$ changes its sign on the interval $(0,2\pi)$ exactly in
two points $x_1\in(0,\pi)$ (from $+$ to $-$) and $x_2\in(\pi,2\pi)$
(from $-$ to $+$). Thus, $x_1\in(0,\pi)$ is the point of maximum and
$x_2\in(\pi,2\pi)$ is the point of minimum of the function
$$
f_2'(x)= (2-3\la)x\cos x + (\la x^2+3\la-2)\sin x.
$$
We have
$$
f_2'(0)=0,\quad f_2'(\pi)=\pi(3\la-2)<0,\quad
f_2'(2\pi)=2\pi(2-3\la)>0,
$$
hence, $f_2'(x)$ changes its sign exactly in two points
$x_3\in(0,\pi)$ (from $+$ to~$-$) and $x_4\in(\pi,2\pi)$ (from $-$
to~$+$).

Thus, the function
$$
f_2(x)\equiv{f_1'(x)}/{x} = (4-4\la-\la x^2)\cos x +(2-\la)x\sin x
+4(\la-1)
$$
has exactly two stationary points $x_3\in(0,\pi)$ which is the point
of maximum and $x_4\in(\pi,2\pi)$ which is the point of minimum. We
have $f_2(0)=0$, $f_2(2\pi)=-\la(2\pi)^2<0$, hence, the function
$f_2(x)$ changes its sign in a unique point $x_5\in(0,2\pi)$ (from
$+$ to $-$), which is the unique point of maximum of the function
$$
f_1(x)\equiv f'(x)\cdot x^5f(x)/2= x(8-\la x^2)\sin x +4(\la
x^2+x^2-4)\sin^2(x/2) - 4x^2.
$$
Since $f_1(0)=0$, $f_1(\pi)=4(\la\pi^2-4)\le0$ for all
$\la\le4\pi^{-2}$, we conclude that the function $f_1(x)$ changes
its sign in a unique point $x_6\in(0,\pi]$ (from $+$ to $-$), which
is the unique point of maximum of $f(x)$ and coincides with
$\theta_2(\la)$, since $f'(x,\la)|_{x=\theta_2(\la)}\equiv0$.

\item
$x\ge2\pi$. For $\la\le4\pi^{-2}$ we have
$$
f(x,\la)=2\sqrt{\Big(\frac{\cos x-1}{x^2}+\frac\la2 \Big)^2
+\Big(\frac{x-\sin x}{x^2}\Big)^2}\le
2\sqrt{\Big(\frac{2}{x^2}+\frac2{\pi^2} \Big)^2
+\Big(\frac{1+x}{x^2}\Big)^2}\le
$$
$$
\le 2\sqrt{\Big(\frac{1}{2\pi^2}+\frac2{\pi^2} \Big)^2
+\Big(\frac{1}{4\pi^2}+\frac1{2\pi}\Big)^2}<0.6268<\frac2\pi
=0.6366\ldots\ .
$$
\end{enumerate}

Summarizing what was said above we conclude that the function
$f(x)=f(x,\la)$ attains its maximum value for $x>0$ at the unique
point $x=\theta_2(\la)\in(0,2\pi)$, if $1/3<\la\le4\pi^{-2}$, and at
the point $x\to0+$, if $0\le\la\le1/3$.

Prove that $q_2(4\pi^{-2})=2\pi^{-1}$. Since
$f'_x(x,4\pi^{-2})|_{x=\pi}=0$, we conclude that
$\theta_2(4\pi^{-2})=\pi$ by virtue of the uniqueness of the root
$\theta_2(\la)$ of the equation $f'_x(x,\la)=0$. Hence,
$$
q_2(4\pi^{-2})=f(\pi,4\pi^{-2})=2\sqrt{\Big(\frac{\cos
x-1+2\pi^{-2}x^2}{x^2}\Big)^2 +\Big(\frac{x-\sin
x}{x^2}\Big)^2}\,\bigg|_{x=\pi} =\frac2{\pi}.
$$

Now prove the properties of the functions $q_2(\la)$,
$\la+q_2(\la)$. Since
$$
f''_{\la\la}(x,\la)=\frac{x^2(x-\sin x)^2}{2 \big((\cos
x-1+\la{x^2}/2)^2 +(x-\sin x)^2\big)^{3/2}}
>0,\quad 0<x<\infty,
$$
the function $f(x,\la)$ is strictly convex in $\lambda\ge0$ for all
$x\in(0,\infty)$. As it follows from what has already been proved,
the least upper bound in the definition of $q_2(\la)$ is attained
for all $1/3<\la\le4\pi^{-2}$ at the finite point $x=\theta_2(\la)$
separated from zero:
$$
q_2(\la)=\sup_{x>0}f(x,\la)=\max_{\theta_2(\la)\le x\le2\pi}
f(x,\la),\quad \theta_2(\la)>0,\quad 1/3<\la\le4\pi^{-2},
$$
hence for all $\la_1,\la_2\in(1/3,4\pi^{-2}]$ and $0\le\alpha\le1$
we have
\begin{multline*}
q_2(\alpha\la_1+(1-\alpha)\la_2) =\max_{\theta_2(\la)\le x\le2\pi}
f(x,\alpha\la_1+(1-\alpha)\la_2)<
\\
< \max_{\theta_2(\la)\le x\le2\pi} \big(\alpha f(x,\la_1)
+(1-\alpha)f(x,\la_2)\big)\le
\\
\le\alpha\sup_{x>0}f(x,\la_1) +(1-\alpha)\sup_{x>0}f(x,\la_2)=
\alpha q_2(\la_1) +(1-\alpha)q_2(\la_2),
\end{multline*}
i.\,e. the function $q_2(\la)$ is strictly convex for
$1/3<\la\le4\pi^{-2}$ as well. Since
$$
q_2(\la)\ge 2\sup_{x>0}(x-\sin x)/x^2 =2\pi^{-1}=q_2(4\pi^{-2}),
$$
$\la=4\pi^{-2}$ being the unique point of minimum  of the function
$q_2(\la)$ in the interval $1/3<\la\le4\pi^{-2}$, the function
$q_2(\la)$ should decrease strictly monotonically for
$1/3<\la\le4\pi^{-2}$. For $0\le\la\le1/3$, obviously, the function
$q_2(\la)=1-\la$ is strictly decreasing.

The function $\la+q_2(\la)$ is strictly convex for
$1/3<\la\le4\pi^{-2}$ as a sum of a convex and a strictly convex
functions, hence, it cannot be constant on any subinterval of the
interval $1/3<\la\le4\pi^{-2}$. On the other hand,
$\la+q_2(\la)\ge1$ for all $0\le\la\le4\pi^{-2}$, thus,
$\la+q_2(\la)$ should be strictly increasing for
$1/3<\la\le4\pi^{-2}$.
\end{proof}


\begin{lemma}\label{Lem_sup(1-cos(x))/x}
Let $\theta_1^*=2.3311\ldots$ be the unique root of the equation
$x\sin x +\cos x-1=0$ within the interval $(0,\pi)$. Then
$$
\varkappa_1\equiv \sup_{x>0}\frac{1-\cos x}{x}=
\frac{1-\cos\theta_1^*}{\theta_1^*}= \sin\theta_1^*=0.724611\ldots\
.
$$
\end{lemma}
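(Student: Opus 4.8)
The plan is to locate the maximum of $f(x)=(1-\cos x)/x$ on $(0,\infty)$ by a direct monotonicity analysis, noting that the equation $x\sin x+\cos x-1=0$ arises precisely as the vanishing of the numerator of $f'$.

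First I would study the auxiliary function $g(x)=x\sin x+\cos x-1$, whose sign governs that of $f'(x)=g(x)/x^2$. Since $g'(x)=x\cos x$, the function $g$ strictly increases on $(0,\pi/2)$ and strictly decreases on $(\pi/2,\pi)$; together with $g(0)=0$, $g(\pi/2)=\pi/2-1>0$ and $g(\pi)=-2<0$ this shows that $g>0$ on $(0,\pi/2]$ and that $g$ has exactly one zero $\theta_1^*$ in $(\pi/2,\pi)$, with $g>0$ on $(0,\theta_1^*)$ and $g<0$ on $(\theta_1^*,\pi)$; in particular the root in $(0,\pi)$ is unique. Consequently $f$ strictly increases on $(0,\theta_1^*)$ and strictly decreases on $(\theta_1^*,\pi)$, so $\sup_{0<x<\pi}f(x)=f(\theta_1^*)$, and since $\theta_1^*>\pi/2$ we have $f(\theta_1^*)>f(\pi/2)=2/\pi$.

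Next I would dispose of the range $x\ge\pi$ by the crude bound $f(x)\le 2/x\le 2/\pi=f(\pi/2)<f(\theta_1^*)$, which shows that the supremum over all $x>0$ is still attained at $\theta_1^*$ and equals $f(\theta_1^*)$. To evaluate it, observe that $g(\theta_1^*)=0$ gives $\theta_1^*\sin\theta_1^*=1-\cos\theta_1^*$, whence $f(\theta_1^*)=(1-\cos\theta_1^*)/\theta_1^*=\sin\theta_1^*$. The numerical values $\theta_1^*=2.3311\ldots$ and $\sin\theta_1^*=0.724611\ldots$ then follow by solving $g(x)=0$ numerically; one may localize $\theta_1^*\in(2,2.4)$ beforehand by checking the signs $g(2)>0>g(2.4)$.

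There is no real obstacle here; the only points needing a little care are the uniqueness of the zero of $g$ in $(0,\pi)$, which is handled entirely by the sign pattern of $g'=x\cos x$, and the tail estimate for $x\ge\pi$, for which the trivial bound $1-\cos x\le2$ is enough precisely because $\sin\theta_1^*>2/\pi$ — and that inequality itself comes for free from the monotonicity of $f$ together with $\theta_1^*>\pi/2$.
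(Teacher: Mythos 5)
Your proof is correct and follows essentially the same route as the paper: locate the unique critical point of $f(x)=(1-\cos x)/x$ through the sign of the numerator $x\sin x+\cos x-1$ of $f'$, dispose of large $x$ by the crude bound $f(x)\le 2/x$, and use the critical-point equation to identify $f(\theta_1^*)=\sin\theta_1^*$. The only differences are cosmetic: the paper factors the numerator as $(x-\tan(x/2))\sin x$ and cuts the tail at $2\pi$ using the numerical value of $\varkappa_1$, whereas you differentiate the numerator ($g'=x\cos x$) on $(0,\pi)$ and compare the tail with $f(\pi/2)=2/\pi$, which is a slightly more self-contained variant of the same argument.
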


\begin{proof}
Consider the function $f(x)=(1-\cos x)/x$, $x>0$. Since for
$x\ge2\pi$ we have
$$
f(x)\le\frac2{x}\le\frac1\pi<0.3184<\varkappa_1,
$$
it suffices only to consider $0<x<2\pi$. We have
$$
f'(x)\cdot x^2=x\sin x +\cos x-1=(x-\tan(x/2))\sin x.
$$
Since $f'(\pi)=-2\pi^{-2}\neq0$, all the zeros of $f'(x)$ within the
interval $(0,2\pi)$ coincide with those of the function
$g(x)=x-\tan(x/2)$. It is easy to see that within the interval
$(0,2\pi)$ the function $g(x)$ vanishes in a unique point
$x_1\in(0,\pi)$ changing its sign from $+$ to $-$. Since $\sin
x_1>0$, the function $f'(x)$ has a unique zero within the interval
$(0,2\pi)$, which coincides with $\theta_1^*$ and delivers maximum
to the function $f(x)$.
\end{proof}

\begin{lemma}\label{LemComplexExpTaylorApprox_n=1}
Let
$$
\lambda^* = \frac{\sin\theta_1^*}{\theta_1^*}=0.3108\ldots\ .
$$
For $1/4<\la\le\la^*$ let $\theta_1(\la)\in(0,\pi)$ be the unique
root of the equation
$$
\cos x(2-\la x^2)+(1+\la)x\sin x -2=0, \quad x\in(0,\pi),
$$
and $\theta_1(\la)=0$ for $0\le\la\le1/4$. Then
$\theta_1(\la^*)=\theta_1^*$, $q_1(\la)=1-\la$ for $0\le\la\le1/4$
and
$$
q_1(\la)=\sqrt{\Big(\frac{\cos x-1}{x}\Big)^2 +\Big(\frac{\sin x
-\la x}{x}\Big)^2} \,\bigg|_{x=\theta_1(\la)},\quad
\frac14<\la\le\la^*.
$$
Moreover, the function $\la+q_1(\la)$ is strictly increasing for
$1/4<\la\le\la^*$, the function $q_1(\la)$ is strictly decreasing
for $0\le\la\le\la^*$. In particular,
$$
q_1(1/4)=3/4,\quad \inf\limits_{\la\ge0}q_1(\la)=
q_1(\la^*)=\sup_{x>0}\frac{1-\cos
x}{x}\equiv\varkappa_1=0.724611\ldots\ .
$$
\end{lemma}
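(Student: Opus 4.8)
The plan is to follow the scheme of the proofs of Lemmas~\ref{LemComplexExpTaylorApprox_n=3} and~\ref{LemComplexExpTaylorApprox_n=2}, which for $n=1$ is noticeably lighter because the whole range of interest $1/4<\la\le\la^*$ lies below $1/3$, so no subdivision of $\la$ is needed. I would write
$$
f(x)=f(x,\la)=\frac1x\sqrt{(\cos x-1)^2+(\sin x-\la x)^2},\qquad x>0,
$$
so that $f(x,\la)=|e^{ix}-1-\la ix|/x$ and $q_1(\la)=\sup_{x>0}f(x,\la)$. For $0\le\la\le1/4$ the triangle inequality gives $f(x,\la)\le f(x,1/4)+|1/4-\la|$, whence $q_1(\la)\le q_1(1/4)+1/4-\la\le1-\la$ by Prawitz's inequality $q_1(1/4)\le3/4$ (the case $n=1$ of~\eqref{ComplexExpTaylorApproxPrawitz}); since $f(x,\la)\to|1-\la|$ as $x\to0+$, this forces $q_1(\la)=1-\la$ with the supremum attained only in the limit $x\to0+$, i.e.\ $\theta_1(\la)=0$, and in particular $q_1(1/4)=3/4$.

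Now fix $1/4<\la\le\la^*$. With $h(x)=(\cos x-1)^2+(\sin x-\la x)^2>0$ one has $f=\sqrt h/x$ and $\sign f'(x)=\sign f_1(x)$, where $f_1(x)=xh'(x)-2h(x)=2\big(\cos x(2-\la x^2)+(1+\la)x\sin x-2\big)$ is (twice) the left-hand side of the equation defining $\theta_1(\la)$. Putting $f_2(x)=\tfrac12 f_1'(x)=\sin x(\la x^2+\la-1)+(1-\la)x\cos x$ and differentiating once more gives $f_2'(x)=x\cos x\cdot g(x)$ with $g(x)=\la x+(3\la-1)\tan x$. Since $1/4<\la\le\la^*<1/3$, the coefficient $3\la-1$ lies in $(-1/4,0)$, and this single regime makes the sign analysis routine: using $g(0)=0$, $g'(0)=4\la-1>0$ and the fixed sign of $g''$ on $(0,\pi/2)$ and on $(\pi,3\pi/2)$, I would show that $g$ has a unique zero $x_1\in(0,\pi/2)$ (change $+\to-$), is positive on $[\pi/2,\pi]$, has a unique zero $x_2\in(\pi,3\pi/2)$ ($+\to-$), and is positive on $[3\pi/2,2\pi)$; together with the sign of $\cos x$ and with $f_2'(\pi/2)=(3\la-1)\pi/2<0$, $f_2'(3\pi/2)=(1-3\la)3\pi/2>0$ this gives that $f_2'$ changes sign on $(0,2\pi)$ only at $x_1$ ($+\to-$) and $x_2$ ($-\to+$). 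The boundary values $f_2(0)=0$, $f_2(\pi)=-(1-\la)\pi<0$, $f_2(2\pi)=2\pi(1-\la)>0$ then force $f_2$ to change sign only at some $x_3\in(x_1,\pi)$ ($+\to-$) and $x_4\in(x_2,2\pi)$ ($-\to+$); integrating once more and using $f_1(0)=0$, $\tfrac12 f_1(\pi)=\la\pi^2-4<0$ (here $\la<4/\pi^2$ enters) and $\tfrac12 f_1(2\pi)=-4\pi^2\la<0$, one concludes that $f_1$, hence $f'$, changes sign on $(0,2\pi)$ at a single point $x_5\in(x_3,\pi)\subset(0,\pi)$, from $+$ to $-$. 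Thus $f(\cdot,\la)$ increases on $(0,x_5)$ and decreases on $(x_5,2\pi)$, and $x_5$ is the unique root in $(0,2\pi)$ of the defining equation, so $x_5=\theta_1(\la)\in(0,\pi)$. For $x\ge2\pi$ the crude estimate $f(x,\la)=\sqrt{((\cos x-1)/x)^2+(\sin x/x-\la)^2}\le\sqrt{(1/\pi)^2+(1/(2\pi)+\la)^2}<1-\la\le f(\theta_1(\la),\la)$ (valid for $\la<1/3$) shows the supremum over $x>0$ is attained at $\theta_1(\la)$, yielding the claimed formula for $q_1(\la)$.

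To pin down $\theta_1(\la^*)$, I would check that $x=\theta_1^*$ solves $f_1(x,\la^*)=0$: substituting $\la^*\theta_1^*=\sin\theta_1^*$ and $\theta_1^*\sin\theta_1^*=1-\cos\theta_1^*$ (Lemma~\ref{Lem_sup(1-cos(x))/x}), the bracket $\cos\theta_1^*(2-\la^*(\theta_1^*)^2)+(1+\la^*)\theta_1^*\sin\theta_1^*-2$ collapses to $\cos^2\theta_1^*+\sin^2\theta_1^*-1=0$; by uniqueness of the root, $\theta_1(\la^*)=\theta_1^*$, and since the summand $\sin\theta_1^*-\la^*\theta_1^*$ vanishes, $q_1(\la^*)=f(\theta_1^*,\la^*)=(1-\cos\theta_1^*)/\theta_1^*=\varkappa_1$. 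For monotonicity I would use $f''_{\la\la}(x,\la)=x(\cos x-1)^2/h(x)^{3/2}>0$ for $0<x<2\pi$, so $f(x,\cdot)$ is strictly convex; since for $1/4<\la\le\la^*$ the supremum defining $q_1(\la)$ is attained at $\theta_1(\la)\in(0,\pi)$ (bounded away from $0$ and $2\pi$), the standard comparison $q_1(\alpha\la_1+(1-\alpha)\la_2)<\alpha q_1(\la_1)+(1-\alpha)q_1(\la_2)$ gives strict convexity of $q_1$ on $(1/4,\la^*]$. As $q_1(\la)\ge\sup_{x>0}(1-\cos x)/x=\varkappa_1$ for every $\la\ge0$ (eliminate the imaginary part) with equality at $\la^*$, a strictly convex function attaining its minimum at the right endpoint is strictly decreasing there; combined with $q_1(\la)=1-\la$ on $[0,1/4]$ this yields strict decrease of $q_1$ on $[0,\la^*]$ and $\inf_{\la\ge0}q_1(\la)=q_1(\la^*)=\varkappa_1$. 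Likewise $\la+q_1(\la)$ is strictly convex on $(1/4,\la^*]$, equals $1$ at $\la=1/4$ and stays $\ge1$ on $[0,\la^*]$ (so cannot first decrease), hence is strictly increasing on $[1/4,\la^*]$.

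The only real obstacle is the bookkeeping in the four successive "differentiate, fix the sign pattern, integrate back" steps of the second paragraph, and in particular remaining accurate near $x=\pi/2$ and $x=3\pi/2$, where $\tan x$ blows up but $f_2'(x)$ stays finite; this is genuinely easier than in the cases $n=2,3$ because only one range of $\la$ occurs.
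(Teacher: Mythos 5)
Your proposal is correct and follows essentially the same route as the paper's proof: the same function $f(x,\la)$, the same chain $f_1$, $f_1'$, $f_1''=x\cos x\,(\la x+(3\la-1)\tan x)$ with the identical sign analysis on $(0,2\pi)$, a crude bound for $x\ge2\pi$, the same verification that $\theta_1(\la^*)=\theta_1^*$ via the identities of Lemma~\ref{Lem_sup(1-cos(x))/x}, and the same convexity-in-$\la$ argument for the monotonicity of $q_1$ and $\la+q_1$. The only deviations are cosmetic (comparing the tail bound with $1-\la$ instead of $\varkappa_1$, and justifying the sign pattern of $g$ via its concavity), and incidentally your signs $f_2'(\pi/2)<0$, $f_2'(3\pi/2)>0$ are the correct ones where the paper's text has a harmless typo.
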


\begin{proof} Denote
$$
f(x)=f(x,\la)=x^{-1}\sqrt{(\cos x-1)^2 +(\sin x -\la x)^2},\quad
x>0.
$$
Then $f(x,\la)>0$, $x>0$, and
$$
q_1(\la)=\sup\limits_{x>0}f(x,\la),\quad \la\ge0.
$$
Notice that $q_1(\la)\ge\sup\limits_{x>0}(\cos
x-1)/x\equiv\varkappa_1>0.7246$ for all $\la\ge0$. From a result
of~\cite{Prawitz1991} it follows that $q_1(1/4)\le3/4$, hence, for
all $0\le\la\le1/4$
$$
q_1(\la)\le q_1(1/4) +1/4-\la\le 3/4+1/4-\la=1-\la.
$$
Since $q_1(\la)\ge\lim\limits_{x\to0+}f(x,\la)=|1-\la|,$ $\la\ge0$,
we conclude that $q_1(\la)=1-\la$, $\theta_1(\la)=0$ for all
$0\le\la\le1/4$.

Now assume that $1/4<\la<1/3$, in particular, $1/4<\la\le\la^*$.
Consider two cases of possible values of $x$:

\begin{enumerate}

\item
$0<x<2\pi$. We have
\begin{eqnarray*}
f_1(x)&\equiv& f'(x)\cdot x^2 \sqrt{(\cos x-1)^2 +(\sin x -\la x)^2}
= \cos x(2-\la x^2)+(1+\la)x\sin x -2,
\\
f_1'(x)&=& (1-\la)x\cos x +  (\la x^2+\la-1)\sin x,
\\
f_1''(x)&=& \la x^2\cos x + (3\la-1)x\sin x = x\cos \cdot g(x),\quad
g(x)=\la x + (3\la-1)\tan x.
\end{eqnarray*}

Obviously, $f_1(\theta_1(\la))\equiv0$ by the definition of
$\theta_1(\la)$. Within the interval $x\in(0,2\pi)$ the function
$x\cos x$ has the zeros $\pi/2$ and $3\pi/2$, which might be the
zeros of the function $f_1''(x)$ as well. However,
$$
f_1''(\pi/2)=(3\la-1)\cdot\pi/2>0,\quad
f_1''(3\pi/2)=(1-3\la)\cdot3\pi/2<0
$$
for all $\la<1/3$. Hence, all the roots of the equation $f_1''(x)=0$
coincide with the zeros of the function $g(x)$, for all
$0\le\la\le\la^*$. The function $g(x)$ vanishes in the points
$x_1\in(0,\pi/2)$ and $x_2\in(\pi,3\pi/2)$ changing its sign from
$+$ to $-$. Since $\cos x_1>0$, $\cos x_2<0$, the function
$f_1''(x)$ changes its sign only in two points $x_1\in(0,\pi/2)$
(from $+$ to $-$) and $x_2\in(\pi,3\pi/2)$ (from $-$ to $+$). Thus,
$x_1\in(0,\pi/2)$ is the point of maximum and $x_2\in(\pi,3\pi/2)$
is the point of minimum of the function $f_1'(x)$. We have
$$
f_1'(0)=0,\quad f_1'(\pi)=\pi(\la-1)<0,\quad
f_1'(2\pi)=2\pi(1-\la)>0,
$$
hence, $f_1'(x)$ changes its sign exactly in two points
$x_3\in(0,\pi)$ (from $+$ to~$-$) and $x_4\in(\pi,2\pi)$ (from $-$
to~$+$).

Thus, the function $f_1(x)$ has exactly two stationary points
$x_3\in(0,\pi)$, the point of maximum, and $x_4\in(\pi,2\pi)$, the
point of minimum. Since $f_1(0)=0$, $f_1(2\pi)=-\la(2\pi)^2<0$, the
function $f_1(x)$ changes its sign in a unique point
$x_5\in(0,2\pi)$ (from $+$ to $-$). Moreover,
$f_1(\pi)=\la\pi^2-4<\pi^2/3-4<0$, hence, $x_5\in(0,\pi)$, $x_5$
delivers maximum to $f(x)$ within the interval $(0,2\pi)$ and
coincides with $\theta_1(\la)$.

\item
$x\ge2\pi$. For $1/4<\la<1/3$ we have
$$
f(x,\la)=x^{-1}\sqrt{2(1-\cos x)-2\la x\sin x +\la^2 x^2}\le
x^{-1}\sqrt{4+2\la x +\la^2 x^2}\le
$$
$$
\le \sqrt{\pi^{-2}+(3\pi)^{-1}+3^{-2}}<0.5644 <\varkappa_1\le
\inf_{\la\ge0}q_1(\la).
$$
\end{enumerate}

Summarizing what was said above we conclude that the function
$f(x)=f(x,\la)$ attains its maximum value for $x>0$ at the unique
point $x=\theta_1(\la)\in(0,\pi)$, if $1/4<\la<1/3$, in particular,
if $1/4<\la\le\la^*$, and at the point $x\to0+$, if $0\le\la\le1/4$.

Prove that $q_1(\la^*)=\varkappa_1$. With
$$
\la=\la^*=\frac{\sin\theta_1^*}{\theta_1^*}=\frac{1-\cos
\theta_1^*}{(\theta_1^*)^2}
$$
(the last relation following from the definition of $\theta_1^*$
given in lemma~\ref{Lem_sup(1-cos(x))/x}), we have
\begin{multline*}
f_1(\theta_1^*)= \cos\theta_1^*(2-\la^*(\theta_1^*)^2)
+(1+\la^*)\theta_1^*\sin\theta_1^* -2=
\\
= \cos\theta_1^*(1+\cos\theta_1^*) +\theta_1^*\sin\theta_1^*
+\sin^2\theta_1^* -2= \cos\theta_1^* +\theta_1^*\sin\theta_1^* -1=0,
\end{multline*}
by the definition of $\theta_1^*$. By virtue of the uniqueness of
the root $\theta_1(\la)$ of the equation $f_1(x)=0$, which is
equivalent to $f'_x(x,\la)=0$ within the interval $(0,2\pi)$, we
conclude that $\theta_1(\la^*)=\theta_1^*$ and thus
$$
q_1(\la^*)=f(\theta_1(\la^*),\la^*)=f(\theta_1^*,\la^*)=
\frac{1-\cos\theta_1^*}{\theta_1^*}=\varkappa_1.
$$

Now prove the properties of the functions $q_1(\la)$,
$\la+q_1(\la)$. Since
$$
f''_{\la\la}(x,\la)=\frac{x(\cos x-1)^2}{\big((\cos x-1)^2 +(\sin
x-\la x)^2\big)^{3/2}}
>0,\quad 0<x<\infty,
$$
the function $f(x,\la)$ is strictly convex in $\lambda$ for all
$x\in(0,\infty)$. As it follows from what has been already proved,
the least upper bound in the definition of $q_1(\la)$ is attained
for all $1/4<\la<1/3$ at the finite point $x=\theta_1(\la)$
separated from zero:
$$
q_1(\la)=\sup_{x>0}f(x,\la)=\max_{\theta_1(\la)\le x\le2\pi}
f(x,\la),\quad \theta_1(\la)>0,\quad 1/4<\la<1/3,
$$
hence for all $\la_1,\la_2\in(1/4,1/3)$ and $0\le\alpha\le1$ we have
\begin{multline*}
q_1(\alpha\la_1+(1-\alpha)\la_2) =\max_{\theta_1(\la)\le x\le2\pi}
f(x,\alpha\la_1+(1-\alpha)\la_2)<
\\
< \max_{\theta_1(\la)\le x\le2\pi} \big(\alpha f(x,\la_1)
+(1-\alpha)f(x,\la_2)\big)\le
\\
\le\alpha\sup_{x>0}f(x,\la_1) +(1-\alpha)\sup_{x>0}f(x,\la_2)=
\alpha q_1(\la_1) +(1-\alpha)q_1(\la_2),
\end{multline*}
i.\,e. the function $q_1(\la)$ is strictly convex for $1/4<\la<1/3$
as well. Since
$$
q_1(\la)\ge \sup_{x>0}(1-\cos x)/x\equiv\varkappa_1=q_1(\la^*),
$$
$\la=\la^*$ being the unique point of minimum  of the function
$q_1(\la)$ on the interval $1/4<\la<1/3$, and thus, the function
$q_1(\la)$ should decrease strictly monotonically for
$1/4<\la\le\la^*$. For $0\le\la\le1/4$, the function
$q_1(\la)=1-\la$ is obviously strictly decreasing.

The function $\la+q_1(\la)$ is strictly convex for $1/4<\la<1/3$ as
a sum of a convex and a strictly convex functions, hence, it cannot
be constant on any subinterval of the interval $1/4<\la<1/3$. On the
other hand, $\la+q_1(\la)\ge1$ for all $0\le\la<1/3$, thus,
$\la+q_1(\la)$ should be strictly increasing for $1/4<\la<1/3$, in
particular, for $1/4<\la\le\la^*$.
\end{proof}

\begin{proof}[Proof of theorem~$\ref{Th_|alpha3|<=beta3}$]
From the results of~\cite{Hoeffding1955,MulhollandRogers1958} it
follows that the extremal value of the linear with respect to the
distribution function $F(x)=\Prob(X<x)$, $x\in\R$, functional
$$
\E X^3=\int_{-\infty}^\infty x^3\,dF(x)
$$
under the three linear moment-type conditions $\E X=0$, $\E X^2=1$,
$\E|X|^3=b$ is attained at a distribution concentrated in at most
four points (i.\,e. the distribution function $F(x)$ being constant
almost everywhere and having at most four jumps). For each $b\ge1$
there exists a unique two-point distribution which satisfies the
conditions $\E X=0$, $\E X^2=1$, $\E|X|^3=b$. This distribution is
given in the formulation of the theorem and turns the stated
inequality into equality. Thus, it remains to consider three- and
four-point distributions only.

Let $X$ take exactly three different values $x,y,z$ with the
corresponding probabilities ${p,q,r>0}$, $p+q+r=1$. Without loss of
generality it can be assumed that $x>y\ge0>z$. From the conditions
$\E X=0$, $\E X^2=1$ we find that
$$
p = \frac{1+yz}{(x-y)(x-z)},\quad q=-\frac{1+xz}{(x-y)(y-z)}, \quad
r=\frac{1+xy}{(x-z)(y-z)},\quad xz<-1<yz.
$$
Then
$$
\E X^3=x+y+z+xyz\equiv\alpha_3(x,y,z),
$$
$$
\E|X|^3=\frac{-z^3(1+xy)-z^2xy(x+y)-z(xy(1-xy)+x^2+y^2)+xy(x+y)}
{(y-z)(x-z)}\equiv \beta_3(x,y,z).
$$
The Lagrange function of the optimization problem
$\alpha_3(x,y,z)\to\sup$ under the constraint $\beta_3(x,y,z)=b$ has
the form
$$
f(x,y,z,\la)=\alpha_3(x,y,z) +\la(\beta_3(x,y,z)-b).
$$
In the stationary points we necessarily have
\begin{eqnarray*}
\frac{\partial}{\partial x}f(x,y,z,\la)&=&
(1+yz)\Big(1+\la\Big(1+\frac{2z^3}{(x-z)^2(y-z)}\Big)\Big)=0,
\\
\frac{\partial}{\partial y}f(x,y,z,\la)&=&
(1+xz)\Big(1+\la\Big(1+\frac{2z^3}{(x-z)(y-z)^2}\Big)\Big)=0.
\end{eqnarray*}
Since $xz<-1<yz$, from these equations we find that
$$
\frac{\la z^3(y-x)}{(x-z)^2(y-z)^2}=0,
$$
whence it follows that $\la=0$ by virtue of the conditions
$x>y\ge0>z$. If $\la=0$, then the condition $f'_x(x,y,z,\la)=0$
implies that $yz=-1$, i.\,e. $p=0$, that contradicts the condition
$xz<-1<yz$ and reduces the problem to checking two-point
distributions considered above.

Now let $X$ take exactly four values $t>u>v>w$ with the
corresponding probabilities $p,q,r,s>0$, $p+q+r+s=1$. From the
conditions $\E X=0$, $\E X^2=1$ we find that
$$
p=\frac{1+uv-s(u-w)(v-w)}{(t-u)(t-v)},\quad
q=-\frac{1+tv-s(t-w)(v-w)}{(t-u)(u-v)},
$$
$$
r=\frac{1+tu-s(t-w)(u-w)}{(t-v)(u-v)}.
$$
Then
$$
\alpha_3(s,t,u,v,w)\equiv\E X^3= t+u+v+tuv-s(t-w)(u-w)(v-w).
$$
Denote $\beta_3(s,t,u,v,w)=\E|X|^3$. Then the Lagrange function of
the optimization problem $\alpha_3(s,t,u,v,w)\to\sup$ under the
constraint $\beta_3(s,t,u,v,w)=b$ has the form
$$
f(s,t,u,v,w,\la)=\alpha_3(s,t,u,v,w) +\la(\beta_3(s,t,u,v,w)-b).
$$
For the proof of the theorem it suffices to consider two cases:

1) $t>u>v\ge0>w$. In this case
\begin{multline*}
\beta_3(s,t,u,v,w)\equiv\E|X|^3= \alpha_3(s,t,u,v,w)-2sw^3=
\\
=t+u+v+tuv-s(w^3+w^2(t+u+v)-w(tu+tv+uv)+tuv).
\end{multline*}
In the stationary points we necessarily have
\begin{eqnarray*}
f'_s(s,t,u,v,w,\la)&=& (1-\la)w^3
+(1+\la)\left(-w^2(t+u+v)+w(tu+tv+uv)-tuv\right)=0,
\\
f'_t(s,t,u,v,w,\la)&=& (1+\la)(1+uv-s(u-w)(v-w))\equiv
p(t-u)(t-v)(1+\la)=0,
\end{eqnarray*}
Since $p>0$ and $t>u>v$, the second equation implies that $\la=-1$.
With this value of $\la$ the first equation implies that $w=0$
contradicting the condition $w<0$. Thus, there are no extremal
distributions in this case.

2) $t>u\ge0\ge v>w$, $u\neq v$. In this case
$$
\beta_3(s,t,u,v,w)\equiv\E|X|^3= \alpha_3(s,t,u,v,w)-2rv^3-2sw^3.
$$
In the stationary points we necessarily have
\begin{eqnarray*}
f'_t(s,t,u,v,w,\la)&=& \frac{p(t-u)(2\la v^3
+(\la+1)(t-v)^2(u-v))}{(t-v)(u-v)}=0,
\\
f'_u(s,t,u,v,w,\la)&=& -\frac{q(t-u)(2\la v^3
+(\la+1)(t-v)(u-v)^2)}{(t-v)(u-v)}=0.
\end{eqnarray*}
With the account of the conditions $p,q>0$, $t>u>v$ these equations
imply $\la=-1$, $v=0$. With these values of $\la$ and $v$ we have
$$
f(s,t,u,0,w,-1)=\alpha_3(s,t,u,0,w)-\beta_3(s,t,u,0,w)+b= 2sw^3+b.
$$
In the stationary points we necessarily have
$$
f'_w(s,t,u,v,w,\la)=6sw^2=0,
$$
whence it follows that $w=0=v$ contradicting the condition $w<v$.
Thus, there are no extremal distributions in this case as well.

The properties of the function
$A(b)=\sqrt{\frac12\sqrt{1+8b^{-2}}+\frac12-2b^{-2}}$ can be
established by examination the derivatives. It is easy to see that
$$
A'(b)\cdot b^{3/2}A(b)=1-\big(1+8b^{-2}\big)^{-1/2}>0,\quad 1\le
b<\infty,
$$
i.\,e. $A(b)$ increases strictly monotonically for all $b\ge1$, and
$$
A''(b)\cdot b^8A^3(b)\big(1+8b^{-2}\big)^{3/2}/4
=16\big(1+8b^{-2}\big)^{1/2} -b\big(\big(b^2+8\big)^{1/2}+9b\big)-48
$$
decreases monotonically and, hence, attains its maximum value
$(-12)$ at the point $b=1$. Thus, $A''(b)<0$ for all $b\ge1$, i.\,e.
$A(b)$ is concave. For the function $bA(b)$ we have
$$
(bA(b))'' =\left(\Big({\frac
b2\sqrt{b^2+8}+\frac{b^2}2-2}\Big)^{1/2}\right)''
=-12\frac{b+\sqrt{b^2+8}}{b^3A^3(b)(b^2+8)^{3/2}}<0,\quad b>1,
$$
hence, $bA(b)$ is concave as well.
\end{proof}

\footnotesize


\begin{thebibliography}{10}

\bibitem{Hoeffding1955}
W.~Hoeffding.
\newblock {The extrema of the expected value of a function of independent
  random variables}.
\newblock {\em Ann. Math. Statist.}, 26(2):268--275, 1955.

\bibitem{KorolevShevtsova2010SAJ}
V.~Korolev, I.~Shevtsova.
\newblock An improvement of the {B}erry--{E}sseen inequality with applications
  to {P}oisson and mixed {P}oisson random sums.
\newblock {\em Scand. Actuar. J.}, 2012(2):81--105, 2012.
\newblock Available online since 04 June 2010.

\bibitem{KorolevShevtsova2010DAN1}
V.~Yu. Korolev, I.~G. Shevtsova.
\newblock An improvement of the {B}erry--{E}sseen inequalities.
\newblock {\em Dokl. Math.}, 81(1):119--123, 2010.

\bibitem{KorolevShevtsova2009}
V.~Yu. Korolev, I.~G. Shevtsova.
\newblock On the upper bound for the absolute constant in the {B}erry--{E}sseen
  inequality.
\newblock {\em Theory Probab. Appl.}, 54(4):638--658, 2010.

\bibitem{KorolevShevtsova2010DAN2}
V.~Yu. Korolev, I.~G. Shevtsova.
\newblock Sharpened upper bounds for the absolute constant in the
  {B}erry--{E}sseen inequality for mixed {P}oisson random sums.
\newblock {\em Dokl. Math.}, 81(2):180--182, 2010.

\bibitem{KorolevShevtsova2010TVP}
V.~Yu. Korolev, I.~G. Shevtsova.
\newblock A new moment-type estimate of convergence rate in the {L}yapunov
  theorem.
\newblock {\em Theory Probab. Appl.}, 55(3):505--509, 2011.

\bibitem{KorolevShorgin1997}
V.~Yu. Korolev, S.~Ya. Shorgin.
\newblock {On the absolute constant in the remainder term estimate in the
  central limit theorem for Poisson random sums}.
\newblock  {\em Probabilistic Methods in Discrete Mathematics, Proceeding of
  the Fourth International Petrozavodsk Conference},  305--308. VSP, Utrecht,
  1997.

\bibitem{Michel1981}
R.~Michel.
\newblock {On the constant in the nonuniform version of the Berry--Esseen
  theorem}.
\newblock {\em Z. Wahrsch. verw. Geb.}, 55(1):109--117, 1981.

\bibitem{Michel1993}
R.~Michel.
\newblock {On Berry--Esseen results for the compound Poisson distribution}.
\newblock {\em Insurance: Mathematics and Economics}, 13(1):35--37, 1993.

\bibitem{MulhollandRogers1958}
H. P. Mulholland, C. A. Rogers.
\newblock {Representation theorems for distribution functions}.
\newblock {\em Proc. London Math. Soc.}, 8(2):177--223, 1958.

\bibitem{NefedovaShevtsova2011DAN}
Yu.~S. Nefedova, I.~G. Shevtsova.
\newblock Structural improvement of nonuniform estimates for the rate of
  convergence in the central limit theorem with applications to {P}oisson
  random sums.
\newblock {\em Dokl. Math.}, 84(2):675--680, 2011.

\bibitem{NefedovaShevtsova2012}
Yu.~S. Nefedova, I.~G. Shevtsova.
\newblock On non-uniform convergence rate estimates in the central limit
  theorem.
\newblock {\em Theory Probab. Appl. {\rm (in Russian)}}, 57(1):62--97, 2012.

\bibitem{Paditz1977}
L.~Paditz.
\newblock {\em \"{U}ber die Ann\"{a}herung der Verteilungsfunktionen von Summen
  unabh\"{a}ngiger Zufallsgr\"{o}{\ss}en gegen unbegrenzt teilbare
  Verteilungsfunktionen unter besonderer beachtung der Verteilungsfunktion der
  standardisierten Normalverteilung}.
\newblock Dissertation {A}, Technische Universit\"{a}t Dresden, Dresden, 1977.

\bibitem{Paditz1986}
L.~Paditz.
\newblock {{\"U}ber eine Fehlerabsch{\"a}tzung im zentralen Grenzwertsatz}.
\newblock {\em Wiss. Z. Hochschule f{\"u}r Verkehrswesen ``Friedrich List''.
  Dresden.}, 33(2):399--404, 1986.

\bibitem{Prawitz1973}
H.~Prawitz.
\newblock Ungleichungen f\"ur den absoluten betrag einer charackteristischen
  funktion.
\newblock {\em {Skand. Aktuarietidskr.}}, (1):11--16, 1973.

\bibitem{Prawitz1991}
H.~Prawitz.
\newblock {Noch einige Ungleichungen f\"ur charakteristische Funktionen}.
\newblock {\em {Scand. Actuar. J.}}, (1):49--73, 1991.

\bibitem{Rozovsky1985}
L.~V. Rozovskii.
\newblock {Accuracy of the approximation of the characteristic functions by
  polynomials}.
\newblock {\em J. Soviet Math.}, 36(4):532--534, 1987.

\bibitem{Shevtsova2011arxiv}
I.~Shevtsova.
\newblock {On the absolute constants in the Berry--Esseen type inequalities for
  identically distributed summands}.
\newblock {\em arXiv:1111.6554 {\rm [math.PR]}}, 28 Nov 2011.

\bibitem{Shevtsova2012ShapeBias}
I.~Shevtsova.
\newblock {On the absolute constants in the Berry--Esseen type inequalities for
  identically distributed summands}.
\newblock {\em arXiv:1212.6775 {\rm [math.PR]}}, 30 Dec 2012.

\bibitem{Shevtsova2006}
I.~G. Shevtsova.
\newblock Sharpening of the upper bound for the absolute constant in the
  {B}erry-{E}sseen inequality.
\newblock {\em Theory Probab. Appl.}, 51(3):549--553, 2007.

\bibitem{Shevtsova2010DAN_0.56}
I.~G. Shevtsova.
\newblock An improvement of convergence rate estimates in {L}yapunov's theorem.
\newblock {\em Dokl. Math.}, 82(3):862--864, 2010.

\bibitem{Tysiak1983}
W.~Tysiak.
\newblock {\em Gleichm{\"a}\ss ige und nicht-gleichm{\"a}\ss ige Berry--Esseen
  Absch{\"a}tzungen}.
\newblock {Dissertation}, Gesamthochschule Wuppertal, Wuppertal, 1983.

\bibitem{Tyurin2009arxiv}
I.~Tyurin.
\newblock {New estimates of the convergence rate in the Lyapunov theorem}.
\newblock {\em arXiv:0912.0726 {\rm [math.PR]}}, 3 Dec 2009.

\bibitem{Tyurin2009DAN}
I.~S. Tyurin.
\newblock {On the accuracy of the Gaussian approximation}.
\newblock {\em Dokl. Math.}, 80(3):840--843, 2009.

\bibitem{Tyurin2010UMN}
I.~S. Tyurin.
\newblock {Refinement of the upper bounds of the constants in Lyapunov's
  theorem}.
\newblock {\em Rus. Math. Surv.}, 65(3):586--588, 2010.

\bibitem{Tyurin2010TVP}
I.~S. Tyurin.
\newblock {On the convergence rate in Lyapunov's theorem}.
\newblock {\em Theory Probab. Appl.}, 55(2):253--270, 2011.

\bibitem{Tyurin2011}
I.~S. Tyurin.
\newblock {A refinement of the remainder in the Lyapunov theorem}.
\newblock {\em Theory Probab. Appl.}, 56(4):693--696, 2012.

\end{thebibliography}

\end{document}